\definecolor{blue}{rgb}{0.0, 0.43, 0.54}
\definecolor{purple}{rgb}{0.45,0.31,0.59}
\newtheorem{defn}{Definition}[section]
\newtheorem{prop}[defn]{Proposition}
\newtheorem{lem}[defn]{Lemma}
\newtheorem{thm}[defn]{Theorem}
\newtheorem{cor}[defn]{Corollary}
\newtheorem{rem}[defn]{Remark}
\newcommand {\ZZ}{{\mathds Z}}
\newcommand {\XX}{{\mathcal X}}
\newcommand {\C}{{\mathds C}}
\newcommand {\Z}{{\mathbb Z}}
\newcommand {\Q}{{\mathds Q}}
\newcommand {\OO}{{\mathcal O}}
\newcommand {\UH}{{\mathcal H}}
\newcommand {\TV}{{\tilde{V}}}
\newcommand {\M}{{\mathcal M}}
\newcommand {\CP}{{\mathds P}}
\newcommand {\ut}{{\underline{t} }}
\newcommand {\TK}{{\tilde{K}}}
\newcommand {\TD}{{\tilde{D}}}
\newcommand {\TQ}{{\tilde{Q}}}
\def\rank{\operatorname{\rank}}
\def\End{\operatorname{End}}
\def\dim{\operatorname{dim}}
\def\res{\operatorname{Res}}
\def\disc{\operatorname{disc}}
\title{Equations defining Jacobians with Real Multiplication}
\author{Rahul Mistry and Ramesh Sreekantan}
\begin{document}
	%\sffamily
	\baselineskip=17pt
	
	\maketitle

\tableofcontents

\begin{abstract}
	
	If $C:y^2=x(x-1)(x-a_1)(x-a_2)(x-a_3)$ is genus $2$ curve a natural question to ask is: Under what conditions on $a_1,a_2,a_3$ does the Jacobian $J(C)$ have real multiplication by $\ZZ[\sqrt{\Delta}]$ for some $\Delta>0$. Over a hundred years ago Humbert \cite{humb} gave an answer to this question for $\Delta=5$ and $\Delta=8$. In this paper we use work of Birkenhake and Wilhelm \cite{biwi} along with some classical results in enumerative geometry to generalize this to all discriminants, in principle. We also work it out  explicitly in a few more cases. 

\end{abstract}
%\classification
%\keywords{Motivic cycles, K3 surfaces, del Pezzo surfaces, rational curves}

\section{Introduction}

\subsection{Curves of genus $2$ and Abelian surfaces}

Let 
$$C=C_{a_1,a_2,a_3}:y^2=x(x-1)(x-a_1)(x-a_2)(x-a_3)$$
$a_1, a_2, a_3 \in \CP^1(\C)\backslash \{0,1,\infty\}$ be a genus $2$ (hyperelliptic) curve over $\C$. The Jacobian $A=J(C)$ is a principally polarized  Abelian surface. A natural question to ask is -`What is the endomorphism ring of $A$?'.

$A$ determines a point in moduli space - the Siegel modular threefold with full level $2$ structure, $S_2(2)$, as the points $a_1,a_2,a_3$ and $0,1,\infty$ determine the level two structure. Most Abelian surfaces have endomorphism ring $\ZZ$ but one has submoduli corresponding to those Abelian surfaces with larger endomorphism rings. So the question is translated to `When does the point corresponding to $A$ lie on a special submodulus?'

\subsection{Endomorphism rings of Abelian surfaces}

There are four possibilities for the endomorphism ring of an Abelian surface over $\C$.

\begin{itemize}
	
	\item $\End(A) \supseteq \ZZ$ and generically one has equality. 
	 
	\item $\End(A) \supseteq \ZZ[\Delta]$ where $\Delta>0$ - this happens on surfaces which are called Humbert surfaces $H_{\Delta}$. The moduli of products of elliptic curves corresponds to the case $\Delta=1$. Here too, generically one has equality. 
	
	\item $\End(A) \supseteq\OO$ where $\OO$ is an order in a indefinite quaternionic algebra over $\Q$  - either a division algebra  $B$ or a subring of $M_2(\Q)$. This happens on curves called Shimura curves, in the first case, or modular curves, in the second. In this case as well generically one has equality. 
	
	\item $\End(A)=\OO_D$, where $\OO_D$ is a subring of $M_2(K)$ where $K=\Q(\sqrt{D})$ is an imaginary quadratic field. In this case $A$ is isogenous to the self product of an elliptic curve with complex multiplication by $\Q(\sqrt{D})$. 
	
\end{itemize}

\subsection{The endomorphism ring and the Neron-Severi group}
The endomorphism ring of an Abelian variety is closely connected with its Neron-Severi group. There is an involution on the endomorphism ring of the Abelian variety determined by the principal polarization called the Rosati involution. The Neron-Severi group can be identified with the elements of the endomorphism ring fixed under this involution.

In the opposite direction, let $L_0$ denote the class of the polarization and 
 $PNS(A)=L_0^{\perp}$ denote its  orthogonal complement in the Neron-Severi group. This is usually called the {\em Primitive Neron-Severi} group.

Recall that the {\em Clifford Algebra} of a lattice $\Lambda$ with a quadratic form $Q$ is defined to be 
$$CL(\Lambda,Q)=T(\Lambda)/<x\otimes x -Q(x)1>$$
and the dimension of $CL(\Lambda,Q)=2^{\dim_{\Q}(\Lambda \otimes \Q)}$. Here $T(\Lambda)$ is the Tensor algebra 
$$T(\Lambda)=\bigoplus_{k=0}^{\infty} \bigotimes^k \Lambda=\ZZ \bigoplus \Lambda \bigoplus \Lambda \otimes \Lambda \bigoplus \cdots$$
From the Hodge Index Theorem the intersection pairing is negative definite on $PNS(A)$. Let 
$<,>=-2(,)$. Then $<,>$ is a positive definite bilinear form on $PNS(A)$ called the {\em Humbert Pairing}. For $D$ in $NS(A)$, 
$$<D,D>=(D,L_0)^2-2(D,D)$$ 
Let $H$ denote the corresponding quadratic form. 

The endomorphism ring is then the  Clifford algebra of the primitive Neron-Severi with respect to the negative of the intersection pairing. 
$$\End(A)=Cl(PNS(A),H)$$ 

For instance if the Abelian surface $A$ has real multiplication the rank of the Neron-Severi, usually called the Picard number  $\rho(A) \geq 2$. If $\rho(A)=2$, as it is generically, and $v$ is a generator of $PNS(A)$,
$$\End(A)=CL(\ZZ v,Q)=\ZZ[\sqrt{H(v)}].$$
$H(v)$ is usually called the {\em Humbert Invariant} and the moduli of Abelian surfaces where there is an element in $NS(A)$ of Humbert invariant $\Delta$ are called Humbert surfaces $H_{\Delta}$.

When $\rho(A)=3$, then $\End(A)$ is an indefinite quaternion algebra and when $\rho(A)=4$, $\End(A)$ is and order in $M_2(K)$ where $K$ is a CM field.

Hence the question becomes - under what conditions on $a_1,a_2$ and $a_3$ is $\rho(J(C))>1?$. Humbert gave a criterion under which the rank of the Neron-Severi is $2$ and the corresponding endomorphism algebra is $\ZZ[\sqrt{5}]$. He then used it to answer the question in the case when $\Delta=5$. 

Hashimoto and Murabayashi \cite{hamu} gave a modern exposition of Humbert's theorem and used that to obtain equations of some Shimura curves. In this article we generalize there work. There are two new ingredients - the first is a theorem of Birkenhake-Wilhelm generalizing Humbert's criteria for $\ZZ[\sqrt{5}]$ and $\ZZ[\sqrt{8}]$ and the second is an observation that a curious coincidence of enumerative geometry allows us to find certain relevant curves. 

Birkenhake and Wilhelm \cite{biwi} generalized Humbert's criterion to infinitely many $\Delta$. 
The aim of this paper is to show that their work can be used to obtain conditions on $a_1,a_2$ and $a_3$ as well along the lines of Humbert. In principle this can be done in an infinite number of cases but in practice it may be a little difficult. 

\section{Humbert's Theorem}

In order to state Humbert's theorem we need to introduce some auxiliary objects. 

\subsection{The Kummer surface}

If $(A,L_0)$ is a principally polarized Abelian surface, the line bundle $L_0^2$ defines a map 
$$\phi=\phi_{L_0^2}:A \rightarrow \CP^3$$
This is a double cover onto its image branched at the $16$ two torsion points of $A$. The image is called the {\em Kummer surface} and we denote it by $K_A$. This is a singular surface with nodes at the images if the $16$ two torsion points. 

The minimal desingularization $\TK_A$ of $K_A$ is a $K3$ surface and we call it  the {\em Kummer K3 surface}. In the literature sometimes this is referred to as the Kummer surface but as we have occasion to use both we make this distinction.

\subsection{The Kummer Plane}

Projecting from $\phi(0)$ defines a rational map $\pi: K_A \dashrightarrow \CP^2$ which can be extended to a morphism from the blow-up of $K_A$ at $\phi(0)$ to $\CP^2$. It is known \cite{biwi} that this is a double cover branched at 6 lines $l_i$ tangent to a conic. These lines meet at 15 points $q_{ij}=l_i \cap l_j$. The points $q_{ij}$ are the image of the $15$ non-zero two torsion points of $A$.  We call the configuration of $(\CP^2,l_1,\dots,l_6)$ the {\em Kummer Plane of $A$} and denote it by $\CP^2_A$.

\begin{center}
	\tikzset{every picture/.style={line width=0.75pt}} %set default line width to 0.75pt        
	
	\begin{tikzpicture}[x=0.75pt,y=0.75pt,yscale=-1,xscale=1]
		%uncomment if require: \path (0,300); %set diagram left start at 0, and has height of 300
		
		%Straight Lines [id:da5744970637213007] 
		\draw    (94,115) -- (349,212.5) ;
		%Straight Lines [id:da6508013744534498] 
		\draw    (141,73.5) -- (186,266.5) ;
		%Straight Lines [id:da02972425927665756] 
		\draw    (282,73) -- (174,266.5) ;
		%Straight Lines [id:da46985972116552366] 
		\draw    (98,237.5) -- (319,103.5) ;
		%Straight Lines [id:da6169709274784014] 
		\draw    (81,185.5) -- (367,186.5) ;
		%Straight Lines [id:da5527031717828892] 
		\draw    (124,84.5) -- (259,256.5) ;
		%Shape: Rectangle [id:dp7506063544913107] 
		\draw   (53,56) -- (389,56) -- (389,273.5) -- (53,273.5) -- cycle ;
		
		% Text Node
		\draw (77,112) node [anchor=north west][inner sep=0.75pt]   [align=left] {$\displaystyle l_{1}$};
		% Text Node
		\draw (105,65) node [anchor=north west][inner sep=0.75pt]   [align=left] {$\displaystyle l_{2}$};
		% Text Node
		\draw (151,71) node [anchor=north west][inner sep=0.75pt]   [align=left] {$\displaystyle l_{3}$};
		% Text Node
		\draw (290,71) node [anchor=north west][inner sep=0.75pt]   [align=left] {$\displaystyle l_{4}$};
		% Text Node
		\draw (313,113) node [anchor=north west][inner sep=0.75pt]   [align=left] {$\displaystyle l_{5}$};
		% Text Node
		\draw (345,161) node [anchor=north west][inner sep=0.75pt]   [align=left] {$\displaystyle l_{6}$};
		% Text Node
		\draw (270,189) node [anchor=north west][inner sep=0.75pt]    {$q_{16}$};
		% Text Node
		\draw (160,244) node [anchor=north west][inner sep=0.75pt]    {$q_{34}$};
		% Text Node
		\draw (136,139.4) node [anchor=north west][inner sep=0.75pt]    {$q_{13}$};
		% Text Node
		\draw (360,66.4) node [anchor=north west][inner sep=0.75pt]    {$\CP^{2}$};
	\end{tikzpicture}
	
	{Heuristic representation of the Kummer Plane}
\end{center}

\subsection{Humbert's Theorem}

We can now state Humbert's theorem. 

\begin{thm}[Humbert]\cite{humb} Let $(A,L_0)$ be a principally polarized Abelian surface and let $\CP^2_A$ be its Kummer plane. Suppose there exists a conic $Q$ in $\CP^2_A$ passing through 5 of the 15 points $q_{ij}$ and meeting the remaining line tangentially, then the Abelian surface $A$ has real multiplication by $\ZZ[\sqrt{5}]$. Conversely, if $A$ has real multiplication by $\ZZ[\sqrt{5}]$ then  there exists such a conic in $\CP^2_A$. 
	\end{thm}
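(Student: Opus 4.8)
The plan is to pass through the Kummer $K3$ surface $\TK_A$ and set up a two–step dictionary: first between divisor classes on $A$ and on $\TK_A$, and then between classes on $\TK_A$ and plane curves in the Kummer plane $\CP^2_A$, via the degree–two map $f:\TK_A \to \CP^2$ branched along $l_1,\dots,l_6$. On the lattice side, the definition of the Humbert surfaces recalled above says that $A$ has real multiplication by $\ZZ[\sqrt 5]$ precisely when there is a primitive class $v\in NS(A)$ with Humbert invariant $H(v)=5$; concretely one may take $v$ with $v^2=-2$ and $(v,L_0)=1$, so that $H(v)=(v,L_0)^2-2(v,v)=1+4=5$. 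The whole proof is the translation of the existence of such a $v$ into the existence of the asserted conic, and back.

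For the forward direction I would start from the splitting criterion for the double cover. If $Q$ is a conic meeting the branch sextic $l_1+\cdots+l_6$ in the divisor $B\cap Q$, then the preimage $f^{-1}(Q)\subset\TK_A$ is reducible, say $f^{-1}(Q)=C^{+}+C^{-}$, exactly when $B\cap Q=2D$ for an effective divisor $D$ on $Q\cong\CP^1$. Labelling so that $Q$ passes through the five nodes $q_{12},q_{23},q_{34},q_{45},q_{15}$, which form a pentagon on $l_1,\dots,l_5$, each of these five lines already meets $Q$ in two of those nodes; hence $B\cap Q$ restricted to $l_1,\dots,l_5$ equals $2(q_{12}+q_{23}+q_{34}+q_{45}+q_{15})$, and the remaining contribution $Q\cap l_6$ is twice a point if and only if $Q$ is tangent to $l_6$. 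Thus the hypothesis is exactly the condition that $f^{-1}(Q)$ splits. I would then compute the class $w=[C^{+}]-[C^{-}]$ in $NS(\TK_A)$ from the relations among $f^{*}H$, the strict transforms $\tilde l_i$, and the exceptional $(-2)$–curves $E_{ij}$ over the nodes, transfer $w$ to a class $v\in NS(A)$ through the Kummer correspondence coming from $\TK_A=\tilde A/\langle -1\rangle$ (under which intersection numbers are multiplied by $2$), and check that $v^2=-2$, $(v,L_0)=1$, so $H(v)=5$. Since $w$ is genuinely new — it is anti–invariant under the deck involution of $f$ and is not a combination of $f^{*}H$ and the $E_{ij}$ — this forces $\rho(A)\geq 2$ and hence the real multiplication.

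The converse runs the dictionary backwards. Given $v\in NS(A)$ with $H(v)=5$, transfer it to a class $w$ on $\TK_A$ with the numerical shape found above. Using Riemann–Roch on the $K3$ surface $\TK_A$ together with the fact that $f^{*}H$ and the $E_{ij}$ span the ``old'' part of the lattice, I would show that, after replacing $w$ by $\pm w$ and adding suitable exceptional curves, $w$ is effective and its push-forward $f_{*}C^{+}$ is a conic $Q$ in $\CP^2$. Reducibility of $f^{-1}(Q)$ then forces $B\cap Q$ to be twice an effective divisor, which by the combinatorial analysis above means precisely that $Q$ passes through five nodes forming a pentagon and is tangent to the sixth line. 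Here the tangency of $l_1,\dots,l_6$ to a common conic — the feature distinguishing the Kummer plane among double covers of $\CP^2$ branched at six lines — is what guarantees that the split curves are rational and that the numerical bookkeeping closes up.

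The main obstacle I anticipate is the intersection–theoretic bookkeeping through the two successive double covers: keeping track of the factors of $2$ coming from $\tilde A\to\TK_A$ and from $f:\TK_A\to\CP^2$, and of the precise contributions of the sixteen nodal curves, so that the transferred class lands on $H(v)=5$ and not on a proper multiple such as $20$. The second delicate point is the effectivity/irreducibility step in the converse: Riemann–Roch produces sections, but one must rule out that $|w|$ is represented only by reducible or non-reduced curves, or that $f_{*}C^{+}$ degenerates to a line or a double line, in order to obtain an honest conic with the stated incidences. An alternative, purely analytic route would be to match the splitting condition against Humbert's singular relation $a\tau_1+b\tau_2+c\tau_3+d(\tau_2^2-\tau_1\tau_3)+e=0$ with $b^2-4ac-4de=5$ via theta functions, but the geometric argument above appears to give the cleaner correspondence.
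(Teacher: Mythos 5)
Your proposal follows essentially the same route as the paper: the conic's everywhere-even intersection with the branch sextic forces its preimage under the double cover to split into two components, and the difference of the two components is an anti-invariant (hence primitive) Neron-Severi class whose Humbert invariant is $5$. The paper packages the splitting as a lemma on double covers of rational curves ramified only at singular points and cites Birkenhake--Wilhelm (Proposition 6.4) for the invariant computation that you propose to carry out by hand on the Kummer $K3$ lattice.
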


The class of the curve  $\phi^* \circ \pi^*(Q)$ in the Neron-Severi pulls back to a multiple of the class of the principle polarization $C$ hence is {\em not} a new element of the Neron-Severi. The reason this theorem holds is the following lemma. 

\begin{lem} Let $Q$ be a rational curve. If $f:D \rightarrow Q$ is a double cover such that the ramification points are singular. Then $D$ is reducible and has two components $D_1$ and $D_2$ such that $D_1$ and $D_2$ meet at the ramification points. 
\label{doublecover}
\end{lem}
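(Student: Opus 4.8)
The plan is to reduce the statement to a concrete equation on $\CP^1$ and then to recognize the defining polynomial as a perfect square. Since $Q$ is rational we may identify it with $\CP^1$ and fix an affine coordinate $x$; a double cover $f\colon D\to Q$ is then cut out inside the total space of a line bundle $\OO(d)$ by an equation $y^2=g(x)$, where $g$ is a section of $\OO(2d)$ and $y$ is the tautological section. The deck involution is $\sigma\colon y\mapsto -y$, and the ramification locus is $\{y=0\}\cap D$, consisting of the points $(x_0,0)$ with $g(x_0)=0$. Computing the Jacobian of $y^2-g(x)$ shows that $D$ is smooth at $(x_0,0)$ precisely when $x_0$ is a simple zero of $g$; hence the hypothesis that every ramification point is singular says exactly that every zero of $g$ is a multiple zero.

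Next I would pin down the local structure at such a point. After centering at $x_0$ the cover is $y^2=u(x)\,x^{k}$ with $u(0)\neq 0$; since the singularity is assumed to be a node, i.e. two smooth branches meeting transversally, the local equation must factor as a product of two distinct smooth branches, which forces $k$ to be even (indeed $k=2$, with branches $\{y=\pm\sqrt{u}\,x\}$). Thus \emph{every} zero of $g$ occurs with even multiplicity, so $\div(g)=2D_0$ for an effective divisor $D_0$ on $\CP^1$. Let $h$ be a section of $\OO(d)$ with $\div(h)=D_0$; then $g$ and $h^2$ are two sections of $\OO(2d)$ with the same divisor $2D_0$, hence proportional, and after absorbing the constant into $h$ we get $g=h^2$.

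It then remains only to read off the splitting: the equation becomes $y^2-h(x)^2=(y-h(x))(y+h(x))=0$, so $D=D_1\cup D_2$ with $D_1=\{y=h(x)\}$ and $D_2=\{y=-h(x)\}$. Each $D_i$ is the graph of a section, hence maps isomorphically to $Q$ under $f$, and the two components meet exactly where $h(x)=0$, that is along $\{y=0\}$, which is the ramification locus. This is the asserted conclusion.

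I expect the only real obstacle to be the middle step, namely upgrading ``singular ramification point'' to ``zero of even multiplicity.'' Singularity alone forces only multiplicity $\ge 2$ (a cusp $y^2=x^3$ is singular but locally irreducible), so one genuinely needs the ramification points to be \emph{nodes}. In the intended application this is automatic: a conic $Q$ produces the branch divisor $Q\cap(l_1+\dots+l_6)$ of degree $12$, and passing through five of the nodes $q_{ij}$ together with one tangency forces this divisor to equal $2(p_1+\dots+p_6)$, i.e. twice a reduced divisor, so every branch point is an $A_1$ node of multiplicity exactly two. A coordinate-free route to the same parity statement uses the involution $\sigma$: over a branch point $\sigma$ fixes the node but interchanges its two local branches, and tracking this interchange globally shows the normalization of $D$ is disconnected, which is again reducibility.
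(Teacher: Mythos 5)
Your proof is correct (under the reading you yourself identify as necessary) but takes a genuinely different route from the paper's. The paper argues topologically: it passes to the normalizations $\tilde{f}\colon \tilde{D}\to\tilde{Q}\simeq \mathds{P}^1$, asserts that $\tilde{f}$ is an \emph{unramified} double cover because the branches of $D$ over each singular ramification point get separated, and concludes from simple connectivity of $\mathds{P}^1$ that $\tilde{D}$, hence $D$, is disconnected into two pieces meeting at those points. You instead make the cover explicit as $y^2=g(x)$, translate ``singular ramification point'' into ``multiple zero of $g$,'' upgrade that to ``zero of even multiplicity,'' and exhibit the splitting algebraically via $g=h^2$ and $y^2-h^2=(y-h)(y+h)$. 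Your version buys an explicit description of the two components as graphs $y=\pm h$ (which is exactly what one wants downstream, where $D_1-D_2$ is fed into the N\'eron--Severi computation), while the paper's version is shorter and coordinate-free; your closing remark about the involution swapping branches is essentially the paper's argument in disguise. Two caveats. First, you identify $Q$ with $\mathds{P}^1$ outright; for the nodal rational curves of degree $d\ge 3$ arising in the Birkenhake--Wilhelm cases $Q$ is not isomorphic to $\mathds{P}^1$, so you should (as the paper does) pull the cover back to the normalization of $Q$ before writing $y^2=g(x)$ --- this changes nothing essential since the extra nodes of $Q$ lie off the branch locus. Second, the gap you flag --- that ``singular'' alone permits a cusp $y^2=x^3$, where $D$ is locally irreducible and the conclusion fails (e.g.\ $y^2=x^3(x-1)^3$ is irreducible with both ramification points singular) --- is real, and it is present in the paper's proof too: at a cusp the normalized map $\tilde f$ is \emph{still ramified}, so the paper's claim that $\tilde f$ is unramified silently uses that each singular ramification point has two branches. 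You are right that in the intended application the branch divisor $Q\cap S$ has even multiplicity everywhere, so both proofs go through; your write-up has the merit of making this hypothesis visible rather than implicit.
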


\begin{proof}
Let $\tilde{f}:\TD \rightarrow \TQ$ be the map between the normalizations induced by the map $f$. This is a unramified double cover and $\TQ\simeq \CP^1$. However since $\CP^1$ is simply connected it has no unramified double covers. Hence it consists of two components $\TD_1$ and $\TD_2$. The images $D_1$ and $D_2$ under the map to $D$ are the two components of $D$ and the points of intersection are the ramification points. 
 
\end{proof}

One can then see that the pull back of one of the components $D_1$ or $D_2$  is a new element of the Neron-Severi group. 

In Humbert's case the conic $Q$ meets the ramified sextic $S$ only at double points. The double cover $D$ of $Q$ in $K_A$ has nodes at those points and hence satisfies the conditions of Lemma \ref{doublecover}. Therefore  $D$ is the union of two components $D_1$ and $D_2$.

Note that the involution $\sigma$ induced by the double cover satisfies $\sigma(D_1)=D_2$. Hence $D_1-D_2$ is orthogonal to the polarization class. Hence the primitive Neron-Severi contains $\ZZ[D_1-D_2]$ and the endomorphism ring contains $Cl(\ZZ[D_1-D_2],-2(,))$. Birkenhake-Willhelm \cite{biwi} (Proposition 6.4) compute this self intersection and from that one has that the Abelian surface has real multiplication by $\ZZ[\sqrt{5}]$. 

\subsection{Co-ordinates}

In order to apply Humbert's theorem to get a criteria for real multiplication we need to describe the points $q_{ij}$ and the lines $l_i$ in terms of the points $a_1, a_2$ and $a_3$. For this we use the setup of Hashimoto-Murabayashi \cite{hamu}. Let 
$$C:y^2=x(x-1)(x-a_1)(x-a_2)(x-a_3)$$
be a curve of genus $2$. Let $[x:y:z]$ denote the co-ordinates of $\CP^2$. One can choose the six lines to be 
$$l_i:y+2a_ix+a_i^2=0 \hspace{1in} 1 \leq i \leq 3$$
$$l_4:y+2x+1=0 \hspace{2cm} l_5:y=0 \hspace{2cm} l_6:z=0$$
From this one can compute the points $q_{ij}=l_i \cap l_j$
$$q_{ij}=[\frac{-(a_i+a_j)}{2}:a_ia_j:1] \text{ for } 1 \leq i,j \leq 3$$
and using $a_4=1, a_5=0$ and $a_6=\infty$ we get 
$$q_{i4}=[\frac{-(a_i+1)}{2}:a_1:1] \hspace{2cm}  q_{i5}=[-a_i:0:2] \hspace{2cm}  q_{i6}=[1:-2a_i:0]$$

We then have the following theorem of Humbert

\begin{thm}[Humbert] \cite{hamu}
	\label{conics}
	Let $(A,L_0)$ be a principally polarized Abelian surface  associated to the curve of genus $2$ given by \[y^2=x(x-1)(x-a_1)(x-a_2)(x-a_3)\text{ with }a_1,a_2,a_3\in\C-\{0,1\}\]
	and let $\End(A)$ denote the endomorphism ring of $(A,L_0)$. Then $\Q(\sqrt{5})\subseteq\End(A)\otimes \Q$ if
	\begin{align*}
		4(a_1^2a_3-a_2^2+a_3^2(1-a_1)+a_2-a_3)(a_1^2a_2a_3-a_1a_2^2a_3)\\
		=(a_1^2a_3(a_2+1)-a_2^2(a_1+a_3)+a_2a_3^2(1-a_1)+a_1(a_2-a_3))^2
	\end{align*}
	holds.
\end{thm}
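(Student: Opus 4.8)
The plan is to derive the stated identity as the explicit tangency condition appearing in the first Humbert Theorem quoted above, and then invoke that theorem. By that theorem, together with the discussion following Lemma~\ref{doublecover} and the self-intersection computation of Birkenhake--Wilhelm, if there is a conic $Q$ in $\CP^2_A$ passing through five of the fifteen points $q_{ij}$ and meeting the remaining line tangentially, then $A$ has real multiplication by $\ZZ[\sqrt{5}]$, and in particular $\Q(\sqrt{5})\subseteq\End(A)\otimes\Q$. So it suffices to show that the displayed equation is precisely the condition for such a conic to exist, expressed through the coordinates of the $q_{ij}$ and $l_i$ recorded above.

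First I would fix the combinatorial data: a choice of a ``remaining'' line $l_k$ and a choice of five of the fifteen points $q_{ij}$, none of them lying on $l_k$. Since a plane conic has five degrees of freedom, five points in general position determine a unique conic $Q$, given by the vanishing of the $6\times 6$ Veronese determinant
\begin{equation*}
\det\begin{pmatrix} x^2 & y^2 & z^2 & xy & yz & zx\\ x_1^2 & y_1^2 & z_1^2 & x_1y_1 & y_1z_1 & z_1x_1\\ \vdots & & & & & \vdots\\ x_5^2 & y_5^2 & z_5^2 & x_5y_5 & y_5z_5 & z_5x_5\end{pmatrix}=0,
\end{equation*}
where $[x_m:y_m:z_m]$ are the chosen points. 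Substituting the explicit coordinates of the $q_{ij}$ expresses the six coefficients of $Q$ as polynomials in $a_1,a_2,a_3$.

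Next I would impose tangency to $l_k$. Parametrizing $l_k$ linearly as $[x(t_0,t_1):y(t_0,t_1):z(t_0,t_1)]$ and restricting $Q$ yields a binary quadratic form $\alpha\, t_0^2+\beta\, t_0 t_1+\gamma\, t_1^2$ whose coefficients $\alpha,\beta,\gamma$ are polynomials in $a_1,a_2,a_3$. The conic meets $l_k$ tangentially exactly when this form has a double root, i.e. when its discriminant $\beta^2-4\alpha\gamma$ vanishes. The claim is that, for the correct choice of the five points and of $l_k$, the relation $\beta^2-4\alpha\gamma=0$ is, after clearing denominators and common factors, exactly the displayed equation; note that the right-hand side is a perfect square while the left-hand side is $4$ times a product, so the displayed identity already has the shape $\beta^2=4\alpha\gamma$ of such a discriminant.

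The main obstacle is twofold. The first and genuinely non-routine part is pinning down the correct combinatorial choice: the displayed polynomial is not symmetric in $a_1,a_2,a_3$ (the factor $a_1a_2a_3(a_1-a_2)$ on the left, for instance, singles out the pair $\{1,2\}$), so the five points and the line $l_k$ must be selected so that the resulting discriminant matches. One must also verify that over a generic point of this locus the conic $Q$ is nondegenerate and genuinely tangent to $l_k$, so that the hypothesis of Humbert's theorem is actually met; the degenerate sublocus where $Q$ splits, or where $\alpha,\beta,\gamma$ vanish simultaneously, should be excluded or treated separately. The second part is the expansion of $\beta^2-4\alpha\gamma$ and its factorization into the stated form, which is a large but entirely mechanical symbolic computation, best carried out on a computer algebra system and then checked to agree term by term with the displayed identity.
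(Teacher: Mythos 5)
Your proposal follows essentially the same route as the paper: the paper's (sketched) proof also takes the unique conic through five of the points $q_{ij}$ and sets the discriminant of its restriction to the remaining line equal to zero. The combinatorial choice you flag as the main open obstacle is resolved in the paper by taking the five points $q_{12},q_{23},q_{34},q_{45},q_{51}$ and imposing tangency to $l_6:z=0$; with that choice your Veronese-determinant-plus-discriminant computation is exactly the paper's argument.
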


\begin{proof} (Sketch). The idea is the following. There is a unique conic passing through any five points in $\CP^2$ in general position (that is, no three are co-linear). Consider the points $q_{12}, q_{23}, q_{34}, q_{45}$ and $q_{51}$. Let $Q=Q(x,y,z)$ be the conic passing through these five points. In general  $Q$ will meet the line $l_6: z=0$ at two points. These two points satisfy the quadratic equation $Q(x,y,0)$. The coefficients of $Q(x,y,0)$ are determined by $a_1,a_2,a_3$ and $a_4$. 
	
	The theorem of Humbert states that one has real multiplication by $\ZZ[\sqrt{5}]$ if $Q$ is tangent to $l_6$. This happens when the two roots coincide - or equivalently, when the discriminant of $Q(x,y,0)$ is $0$. Since the coefficients are determined by the $a_i$ the condition that the discriminant is $0$ is the expression above. 
	
\end{proof}
A different choice of points will result in a different equation. The moduli of Abelian surfaces with real multiplication by $\ZZ[\sqrt{5}]$ has $6$ components in the Siegel modular threefold of level $2$. These six components correspond to the different choices of lines $l_i$. 

There are other instances when a conic can meet the six lines everywhere with multiplicity $2$. For instance, there exists two conics passing through $4$ points in general position in $\CP^2$ and tangent to two lines and there exist four conics passing through $3$ points and tangent to three lines. Humbert showed that these correspond Abelian surfaces with Humbert invariant $4,8$ or $9$. 

\subsection{The Theorem of Birkenhake and Wilhelm}

Birkenhake and Wilhelm \cite{biwi} greatly generalized Humbert's theorem to give criteria for multiplication by $\ZZ[\sqrt{\Delta}]$ for an infinite number of $\Delta$. The idea is the same - if there is a {\em rational} curve of degree $d$ which meets the six lines at $3d$ points, then the normalization of the pre-image in $K_A$ is an unramified double cover of $\CP^1$ hence has two components. Birkenhake and Willhelm compute the Humbert invariant of one of the components to get their theorem. Conversely, for $\Delta$ of a particular type determined by the degree and points even multiplicity, they show that there exists such a rational curve. They have the following theorem. 

\begin{thm}[Birkenhake-Wilhelm] Let $d$  be a natural number and $3 \leq k \leq 12$ and let $\Delta$ be as below
	
		\vspace{\baselineskip}
	\begin{center}\label{valdelta}
		\begin{tabular}{||l|c|c|}
			\hline
			Case  & $\Delta$ & $d+k$  \\
			\hline
			$I.$ & $2d^2+7-2k$ & $d+k$ odd\\
			$II.$ & $2d^2+8-2k$ & $d+k$ even\\
			$III.$ & $(d+1)^2$ &$d+3$\\
			\hline
		\end{tabular}
	\end{center}
	\vspace{\baselineskip}
	
	Let $(A,L_0)$ be a principally polarized Abelian surface with a class $D_{\Delta}$ of Humbert invariant $\Delta$ and $S_A=\prod_{i=1}^6 l_ii$ be the (degenerate) sextic given by the product of ramified lines in the Kummer plane $\CP^2_A$. 
	
	Then if $\Delta$ is as above, there exists a rational curve $Q_{\Delta}$ on $\CP^2_A$ of degree $d$  which passes through $k$ of the points $q^{ij}$ with no singularities at those points and meets $S_A$ at the remaining points with even multiplicity. 
	
	Further, $Q_{\Delta} = \pi \circ  \phi(C_{\Delta} )$ where $C_{\Delta}$ is a curve on $A$ which lies in a class of the form $a\L_0 + bD_{\Delta}$ with $b\neq 0$. In particular, the class of $C_{\Delta}$ is not a multiple of the class of the principal polarization. 
	
	Conversely, if there is a rational curve of degree $d$ which passes through $k$ points $q_{ij}$ and meets the sextic at the  remaining points tangentially then there is a class of invariant $\Delta'$ for some $\Delta' \leq \Delta$, where $\Delta$ is as above.   
	
\label{birkwilh}
\end{thm}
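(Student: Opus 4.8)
The plan is to transport Humbert's argument (Theorem~\ref{conics}) and the splitting Lemma~\ref{doublecover} to the minimal resolution $\TK_A$, exploiting the $K3$-surface principle that an irreducible class of self-intersection $-2$ is represented by a smooth rational curve. Concretely, I would set up a dictionary between such $(-2)$-classes on $\TK_A$ and the combinatorial data $(d,k)$, express the Humbert invariant $\Delta$ as the discriminant of a rank-two sublattice of $NS$, and read off the three rows of the table from a single self-intersection computation together with a congruence modulo $4$. Both implications of the theorem then become two readings of the same computation.

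For the forward direction I would start from the class $D_\Delta$ with $\langle D_\Delta,D_\Delta\rangle=\Delta$ and pull $L_0$ and $D_\Delta$ back into $NS(\TK_A)$, adjoining the sixteen exceptional $(-2)$-curves $E_{ij}$ over the nodes of $K_A$ and the class $H=\pi^*(\text{line})$ of the Kummer plane. Inside this lattice I would look for an effective class $R$ of self-intersection $-2$ built from $H$, the $E_{ij}$ and $D_\Delta$; by Riemann--Roch on the $K3$ surface, $\chi(R)=2+\tfrac12(R,R)=1$, so $R$ or $-R$ is effective, and by applying the reflections in the $(-2)$-classes $E_{ij}$ (the Weyl-group action on $NS(\TK_A)$) one moves $R$ into a chamber where it is represented by an \emph{irreducible} rational curve. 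Its projection $Q_\Delta=\pi(R)$ to $\CP^2_A$ then has degree $d=(R,H)$, passes through exactly the $k$ nodes $q_{ij}$ for which $(R,E_{ij})\neq 0$ with no singularity there, and meets the remaining branch locus $S_A$ with even local multiplicity (hence tangentially) at the other $3d-k$ points, since the only odd contacts of $R$ with the branch classes are absorbed into the nodes. Pushing $R$ forward to $A$ identifies $Q_\Delta$ with $\pi\circ\phi(C_\Delta)$ for $C_\Delta$ in a class $aL_0+bD_\Delta$; here $b\neq 0$ precisely because $R$ meets some $E_{ij}$ and so is not proportional to the polarization.

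The core of the argument, and the step I expect to be the main obstacle, is the self-intersection computation identifying $\Delta$ with the entry in the table. Following the scheme of the $\Delta=5$ case --- which is $(d,k)=(2,5)$, Humbert's original conic through five of the nodes tangent to the remaining line --- I would take one component $D_1$ of $\phi^*\pi^*Q_\Delta=D_1+D_2$ produced by Lemma~\ref{doublecover} and compute its Humbert invariant, i.e.\ the discriminant $(D_1,L_0)^2-2(D_1,D_1)$ of the rank-two N\'eron--Severi lattice $\langle L_0,D_1\rangle$. Via the projection formula for the two double covers $\phi$ and $\pi$, the degree $d$ of $Q_\Delta$ fixes $(D_1,L_0)$ and hence the leading term $2d^2$, while passing through $k$ nodes and imposing simple tangency at the remaining $3d-k$ contacts fixes $(D_1,D_1)$ through the intersections with the $E_{ij}$ and the branch classes, producing the constant and the $-2k$; this is the content of Proposition~6.4 of~\cite{biwi}. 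The trichotomy is then forced by congruences: since $NS$ of an abelian surface is an even lattice, the discriminant is automatically $\equiv 0,1\pmod 4$, and its residue equals the parity of $(D_1,L_0)$, which tracks $d+k$ and separates Cases~$I$ and~$II$; Case~$III$ is the degenerate configuration $k=3$ in which the cover decomposes further, yielding the perfect-square invariant $(d+1)^2$ of a product of elliptic curves. The delicate points are keeping the factors of $2$ from the degree-$2$ maps consistent and, above all, guaranteeing that the chosen $(-2)$-class is realised by a single irreducible curve in sufficiently general position rather than by a reducible or everywhere-tangent degeneration.

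For the converse I would run the same computation backwards. Given a rational $Q$ of degree $d$ through $k$ nodes and tangent to $S_A$ elsewhere, the contact conditions say that $Q$ meets the branch locus with even local intersection at every point, so $\phi^*\pi^*Q$ is a double cover of $Q\cong\CP^1$ ramified only over singular points; Lemma~\ref{doublecover} splits it as $D_1+D_2$ with $D_1-D_2\in PNS(A)$, witnessing the extra endomorphisms. Computing the discriminant of $\langle L_0,D_1\rangle$ as above returns the table value; however, the class $D_1$ (equivalently $D_1-D_2$) need not be primitive and the configuration may in fact be the special Case~$III$ one, so the primitive class actually generating the new rank-two lattice can carry a strictly smaller (square) invariant. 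This is exactly why the converse asserts only the existence of a class of invariant $\Delta'\le\Delta$. The one point still requiring care is to confirm that ``tangentially'' is interpreted as even local intersection at each point of $Q\cap S_A$, so that the normalization of the double cover is genuinely disconnected and Lemma~\ref{doublecover} applies verbatim.
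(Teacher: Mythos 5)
This theorem is not proved in the paper at all: it is quoted from Birkenhake--Wilhelm \cite{biwi}, and the only thing the paper offers is the one-sentence heuristic preceding the statement (pull the curve back to the Kummer surface, split the unramified double cover of $\CP^1$ via Lemma \ref{doublecover}, and compute the Humbert invariant of one component). Your outline is consistent with that heuristic and with the general strategy of the reference, so as a description of \emph{where} the proof lives it is reasonable. But as a proof it has a genuine gap at exactly the point you yourself flag as ``the main obstacle,'' and the mechanism you propose to get past it does not work as stated.

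Concretely: in the forward direction you need an \emph{irreducible} rational curve on $\TK_A$ whose image in $\CP^2_A$ has the prescribed degree $d$ and passes through exactly the prescribed number $k$ of nodes. Riemann--Roch on the $K3$ gives you that a $(-2)$-class $R$ or $-R$ is effective, and Weyl-group reflections in the $E_{ij}$ move $R$ into the fundamental chamber --- but each reflection replaces $R$ by $R+(R,E_{ij})E_{ij}$ and therefore changes precisely the intersection numbers $(R,H)$ and $(R,E_{kl})$ that encode $d$ and $k$. So the class you end up with after making it irreducible is no longer the class you computed $\Delta$ for, and the dictionary $(d,k)\leftrightarrow\Delta$ collapses. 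Moreover, an effective $(-2)$-class in the fundamental chamber need not be indecomposable in the effective cone, so irreducibility is not automatic either. The actual content of the theorem is the careful construction of a class with the right simultaneous intersection data, and for the quantitative step --- that the discriminant of $\langle L_0,D_1\rangle$ equals $2d^2+7-2k$ or $2d^2+8-2k$ --- you cite Proposition 6.4 of \cite{biwi}, which is circular if the aim is an independent proof. The mod-$4$ congruence separating Cases $I$ and $II$, and the identification of Case $III$ with a $k=3$ split-Jacobian degeneration, are likewise asserted rather than derived. Your converse direction is sound in outline (it is essentially the argument the paper does spell out around Lemma \ref{doublecover}), including the correct observation that non-primitivity of $D_1-D_2$ is what forces the weakening to $\Delta'\le\Delta$.
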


Humbert's case above is the case when $d=2$ and $k=5$, so $d+k=7$ is odd. Then $2d^2+7-2k=8+7-10=5$. Along the lines of the expression in  Humbert's  Theorem \ref{conics} we have the following theorem. 

\begin{thm} Let $(A,L_0)$ be a principally polarized Abelian surface which is the Jacobian of a curve $C:y^2=x(x-1)(x-a_1)(x-a_2)(x-a_3)$. Then, for $\Delta$ as in  Theorem \ref{birkwilh} there is a quadratic form $B_{\Delta}$ with coefficients determined by the $a_i$ such that if the discriminant of the quadratic form is $0$ then $A$ has real multiplication by $\ZZ[\sqrt{\Delta'}]$ for some $\Delta' \leq \Delta$. 
\end{thm}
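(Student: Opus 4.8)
The plan is to carry over the argument sketched for Theorem~\ref{conics} to the general Birkenhake--Wilhelm setting. By the converse direction of Theorem~\ref{birkwilh}, it suffices to produce an explicit algebraic condition on $a_1,a_2,a_3$ which forces the existence of a rational curve $Q_\Delta \subset \CP^2_A$ of degree $d$ passing through the prescribed $k$ of the points $q_{ij}$ and meeting the remaining ramification lines tangentially; such a curve produces, via Theorem~\ref{birkwilh}, a Neron-Severi class of Humbert invariant $\Delta' \le \Delta$ and hence real multiplication by $\ZZ[\sqrt{\Delta'}]$. Using the explicit coordinates for the $q_{ij}$ and the $l_i$ recorded in the previous subsection, I would express this condition as the vanishing of the discriminant of a single binary quadratic form $B_\Delta$ whose coefficients are polynomials in the $a_i$, exactly as $Q(x,y,0)$ plays that role in the case $d=2,\ k=5$.

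The first step is the parametrization. The nodal rational plane curves of degree $d$ form a Severi variety of dimension $3d-1$. A degree $d$ curve meets the degenerate sextic $S_A=\prod_i l_i$ in $6d$ points; passing smoothly through a node $q_{ij}$ absorbs two of these, so after passing through the $k$ assigned points there remain $6d-2k$ intersection points, i.e.\ $3d-k$ tangency points, to be imposed. Here the enumerative coincidence enters: the $k$ incidence conditions together with all but one of the $3d-k$ tangency conditions amount to exactly $k+(3d-k-1)=3d-1$ conditions, cutting the Severi variety down to a zero-dimensional, finite (and in the favourable cases unique) set of curves whose coefficients are algebraic over $\Q(a_1,a_2,a_3)$. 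Fixing the line $l_j$ that is to carry the final tangency, the restriction of such a curve to $l_j$ is a binary form of degree $d$ in the coordinate along $l_j$; dividing out the linear factors corresponding to the forced intersections at the assigned $q_{ij}$ lying on $l_j$ leaves a binary quadratic $B_\Delta$.

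The last tangency condition---that $Q_\Delta$ meet $l_j$ with multiplicity two at the remaining intersection point---is precisely the statement that $B_\Delta$ has a double root, that is $\disc(B_\Delta)=0$. Since the coordinates of the $q_{ij}$ are explicit rational functions of $a_1,a_2,a_3$, clearing denominators makes the coefficients of $B_\Delta$, and hence $\disc(B_\Delta)$, explicit polynomials in the $a_i$; when this discriminant vanishes the curve $Q_\Delta$ exists with all prescribed tangencies and Theorem~\ref{birkwilh} delivers the desired real multiplication by $\ZZ[\sqrt{\Delta'}]$. Specializing to $d=2,\ k=5$ imposes no auxiliary tangencies, the curve is the unique conic through five points, $B_\Delta=Q(x,y,0)$, and one recovers Theorem~\ref{conics}.

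The main obstacle is the parametrization step rather than the final discriminant. For $d=2,\ k=5$ the five incidence conditions pin down a unique conic and no auxiliary tangency need be imposed; for general $(d,k)$ one must impose $3d-1-k$ intermediate tangency conditions, each of which is itself nonlinear (a discriminant condition on a restriction to a line) and may split into several branches, so the zero-dimensional set of curves is genuinely a finite algebraic set rather than a single point. Organizing these branches so that the surviving condition is honestly the discriminant of a \emph{quadratic} form---and correctly accounting for the weaker conclusion $\Delta'\le\Delta$, which arises when $Q_\Delta$ degenerates or realizes a smaller invariant---is where the computation becomes delicate, and is the reason the equation can be written down only in principle for arbitrary $\Delta$ and must be produced case by case in practice.
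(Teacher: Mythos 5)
Your overall strategy matches the paper's: find a rational degree-$d$ curve through $3d-1$ of the prescribed double points, restrict to the remaining line, divide out the forced factors, and take the discriminant of the residual quadratic. But there is a genuine gap at exactly the step you describe as merely ``delicate.'' Your existence argument for the curve $Q_\Delta$ is a pure dimension count on the Severi variety: $k$ incidence conditions plus $3d-k-1$ tangency conditions give $3d-1$ conditions on a $(3d-1)$-dimensional family, hence a ``zero-dimensional, finite set.'' A zero-\emph{expected}-dimensional intersection can be empty, and here the conditions are imposed at points that are very far from general position: the $q_{ij}$ are the nodes of a configuration of six lines all tangent to one conic, and the tangencies are required along those same six lines. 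The paper explicitly flags that the general-position count ``does not immediately imply that it holds for $3d-1$ of the points of intersection of $S$ with a rational curve,'' and supplies the missing ingredient as Theorem \ref{enumgeom}: starting from the one special pair $(S_{z_0},Q_{z_0})$ guaranteed by the \emph{existence} direction of Birkenhake--Wilhelm (an Abelian surface with invariant $\Delta$ does carry such a curve), a Hilbert-scheme, codimension and flatness/semicontinuity argument over the irreducible family $\Sigma$ of degenerate sextics propagates non-emptiness to \emph{every} Kummer sextic. Without this (or an equivalent degeneration argument) your $B_\Delta$ need not exist for a given $(a_1,a_2,a_3)$, and the theorem is not proved.

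A secondary omission: you only treat the case in which both residual intersection points $P_1,P_2$ lie on the same line $l_j$, where $B_\Delta$ is the degree-two factor left after dividing the restriction of $F_{Q}$ to $l_j$ by the forced linear factors. The paper also handles the case where $P_1$ and $P_2$ lie on \emph{different} lines $l_1,l_2$: there one divides each restriction by its double roots to get linear forms $L_1,L_2$ and sets $B=L_1L_2$, so that $\disc(B)=0$ forces the two points to collide at the node $q_{12}$. Since you cannot choose in advance which configuration occurs, both cases are needed for the statement as written.
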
 

\begin{proof} Recall that in  Humbert's we used the fact that there is a conic passing through any five points in general position. By a curious accident of enumerative geometry a similar statement holds in general. If $Q$ is a rational curve of degree $d$ then it will meet the sextic $S=\bigcup_{n=1}^6 l_i$ at $6d$ points. However, the theorem of Birkenhake and Willhelm states that if all the points of intersection are double points then the corresponding Abelian surface has real multiplication by some $\Delta$.  So such a rational curve determines $3d$ points - say $k$ points of the form $q_{ij}$ and $3d-k$ remaining points of even multiplicity (counted carefully). The idea is then to argue that for {\em any} $S$ there is a rational curve of degree $d$ which passes through $3d-1$ points. Enumerative geometry suggests that it is likely that such a statement holds - as there always exists a rational curve passing through $3d-1$ points in general position in $\CP^2$. More generally, there exists rational curves passing through $3d-k$ points and tangent to $k-1$ lines. 
	
	However, that does not immediately imply that it holds for $3d-1$ of the points of intersection of $S$ with a rational curve. For that we need the following theorem - the proof of which was suggested to us by Tom Graber.

\begin{thm} \label{enumgeom}
	
	Consider pairs $(S,Q)$ where $S=\bigcup_{i=1}^6 l_i$ is a degenerate sextic  in $\CP^2$  
	 and $Q$ is a rational curve of degree $d$. In general they will meet at $6d$ points. We make the following assumption: 
	
	{\bf Assumption}: For some $z_0$  there exists a degenerate sextic $S_{z_0}=\bigcup l_{i,z_0}$ and a nodal rational curve $Q_{z_0}$ of degree $d$  such that $S_{z_0}$ and $Q_{z_0}$ meet at $ \leq 3d+1$   points of the following type 
	
	\begin{enumerate}
		\item $k$  nodes  of $S_{z_0}$, say $p_{i_1}(z_0),\dots,p_{i_k}(z_0)$. 
		\item $3d-1-k$ other points tangent to $S_{z_0}$ (which could coincide to be of even multiplicity). 
		\item $2$ other points (which could coincide and be of one  of the above types). 
	\end{enumerate}
	
	Then, for {\bf any} degenerate sextic $S_z$ corresponding to a point $z$,
	there exists a nodal rational curves $Q_z$ of degree $d$ meeting the sextic at $3d-1$ 
	points of even multiplicity and two other points. There are two possibilities - either $3d-1-k$ points of tangency and passing through the $k$ nodes $p_{i_1}(z),\dots,p_{i_k}(z)$ or meeting the sextic at $3d-k$ points of tangency and passing thought $k-1$ nodes - and both exist.

\end{thm}

\begin{proof}  
	
	Let $\M_d$ be the space of degree $d$ nodal rational curves in $\CP^2$ and let $\Sigma$ be (an irreducible component of)  the space of sextic curves of the type $\bigcup_{i=1}^6 l_i$.  It is known that the dimension $\dim \M_d=3d-1$. Let $\M=\M_d(k)$ be the subspace of $\M_d$ of rational curves passing through $k$ points in general position. This is of dimension $3d-k-1$.  
	
	Let $\XX$ be the locus in $\M\times \Sigma$ parametrizing pairs $(Q_z,S_z)$ where $Q_z$ meets $S_z$ at $(3d-1)$ double points of which  $k$ are the nodes $p_{i_1}(z),\dots,p_{i_k}(z)$ of $S_z$ and two other points. $\XX$ is non-empty as by assumption $(S_{z_0},Q_{z_0})$ corresponds to a point on $\XX$. There is a morphism 
	$$\pi:\M \times \Sigma \longrightarrow \UH_{6d-2k}$$ 
	$$\pi(Q,S) \longrightarrow (Q \cap S)\backslash \{p_{i_1},\dots, p_{i_k}\}$$
	where $\UH_{6d-2k}$ is the Hilbert scheme of $6d-2k$ points in $\CP^2$. 
	
	Let $Z$ be the stratum of $\UH_{6d-2k}$ parametrizing subschemes of $3d-k-1$ double points $\cup$ $2$ reduced points. Then $\XX$ is just $\pi^{-1}(Z)$. The closure $\bar{Z}$ is an irreducible codimension $(3d-k-1)$ subscheme of $\UH_{6d-2k}$ as it is determined by choosing $3d-k+1$ points - which is $6d-2k+2$ dimensional - and then $3d-k-1$ tangents at $3d-1-k$ of those points - which makes it $9d-2k+1$ dimensional. Further $\UH_{6d-2k}$ is $12d-4k$ dimensional. Since $\UH_{6d-2k}$ is smooth   every irreducible component of $\pi^{-1}(\bar{Z})$ has codimension at most $(3d-k-1)$. 
	
	From our assumption we know that for some $z_0$ in $\Sigma$ there is a pair $(Q_{z_0},S_{z_0})$ in $\pi^{-1}(\bar{Z})$. Equivalently, the local dimension of $\XX \cap (\M \times \{S_{z_0}\})$ is $\geq 0$. Looking at $\pi$ near $(Q_{z_0},S_{z_0})$, since $\dim(\XX) \geq \dim (\Sigma_T)$, the map $\pi:\XX \rightarrow Z$ is flat (\cite{hart},Chapter III, Theorem 9.9) and  the fact that the local fibre dimension is $\geq 0$ at $(Q_{z_0},S_{z_0})$ implies all nearby fibre dimensions are $\geq 0$ . This is equivalent to the fact that there is such a  pair $(Q_z,S_z)$ for every $z$ in a Zariski open neighborhood of $z_0$. Since $\Sigma$ is irreducible, this holds for all pairs $(Q_z,S_z)$ with $z$ in $\Sigma$.

\end{proof}

\begin{rem} In fact, we expect that the dimension is $0$ - namely there are a finite non-zero number of rational curves $Q$ for each $S$  when there is a special rational curve of the same type. There are instances when the special rational curve cannot exist - for instance, one cannot have a nodal cubic meeting six lines only at points of tangency.
\end{rem}

To apply this theorem we observe that the theorem of Birkenhake-Wilhelm give us the special pair $(S_{z_0},Q_{z_0})$ as their theorem says that if an Abelian surface has multiplication by $\ZZ[\sqrt{\Delta}]$ one has such a pair.

Armed with this theorem we can complete the argument. The rational curve $Q_z$ meets $S_z$ at $3d-1$ double points. Then it can be seen that the two remaining points satisfy a quadratic equation. There are two cases. 

Case 1: If $P_1$ and $P_2$ are the two points and they lie on the same line $l$.  Then if $Q$ is given by $F_Q=0$, $F_l:=F_Q|_l$ is a polynomial of degree $d$ in one variable. It has roots at all the double points of $Q \cap l$ and the two remaining points. Dividing out by the linear terms corresponding to the double points gives us the quadratic equation $B$. When the discriminant of $B$ is $0$ the two points $P_1$ and $P_2$ coincide and by the theorem of Birkenhake-Wilhelm we have multiplication by $\ZZ[\sqrt{\Delta'}]$ for some $\Delta'\leq \Delta$. 

Case 2. If $P_1$ and $P_2$ lie on different lines. Say $P_1$ lies on $l_1$ and $P_2$ on $l_2$. Then consider the two polynomials $F_{l_1}$ and $F_{l_2}$. Dividing out by the double roots now gives us two linear polynomials $L_1$ and $L_2$. Then $B=L_1L_2$ is a quadratic polynomial such that if its discriminant is $0$ the two points $P_1$ and $P_2$ coincide at $q_{12}$ and once again the theorem of Birkenhake-Wilhelm applies.

\end{proof}

\section{Graphs}

In order to apply the work of the previous section to obtain equations we need to make choices of $3d-1$ suitable points out of the $3d$ points determined in the theorem of Birkenhake-Willhelm. There are several choices but one has to be slightly careful in choosing the points. For instance, one has to ensure that every line contains $d$ points of multiplicity $2$ - so the points have to either be points of intersection with the other lines or points of even tangency.

For example, for the original theorem of Humbert, we have to choose $6$ points - $5$ double points and one point of tangency. One choice is $q_{12},q_{23},q_{34},q_{45}$ and $q_{51}$ with the point of tangency being at $t_6$ on $l_6$. However, there are clearly other choices. 

As $d$ gets larger it gets increasingly difficult to decide what the possible choices of $3d$ tuples are. Let us call a tuple of $3d$ points {\em admissible} if it corresponds to a case of the theorem of Birkenhake-Wilhelm. 

A convenient way of finding admissible tuples is to relate them to graphs.  We have a correspondence between certain graphs and admissible tuples given as follows:

Let $T$ be an admissible tuple of $3d$ points - a combination of double points $q_{ij}$  and tangents $t_i$ to $T$ where we count the point $r$ times if the multiplicity is $2r$, so $|T|=3d$. To $T$ we associate a graph $\Gamma_T$ as follows: 

\begin{itemize}
\item  $\Gamma_T$ is a graph on $6$ vertices $v_i$, one for every line $l_i$. 

\item  Two vertices $v_i$ and $v_j$ are joined by an edge  $\Leftrightarrow$ $q_{ij} \in T$. 

\item  There is a loop at $v_i$ $\Leftrightarrow$ There is a $t_i$ in $T$. 

\item The condition that every line contains $d$ points this means that $\Gamma_T$ is a $d$-regular graph on $6$ vertices - where loops are counted with valence $2$.

\end{itemize} 

As an example, if $T=\{q_{12},q_{23},q_{34},q_{45},q_{51},t_6\}$, then $\Gamma_T$ is 

\vspace{\baselineskip}
\begin{center}
	
\begin{tikzpicture}[scale=0.5]
		% Define the vertices
		\node[circle, draw, fill=blue!20] (v1) at (0,0) {1};
		\node[circle, draw, fill=blue!20] (v2) at (2,0) {2};
		\node[circle, draw, fill=blue!20] (v3) at (3,2) {3};
		\node[circle, draw, fill=blue!20] (v4) at (2,4) {4};
		\node[circle, draw, fill=blue!20] (v5) at (0,4) {5};
		\node[circle, draw, fill=blue!20] (v6) at (-1,2) {6};
		
		% Draw the edges
		\draw (v1) -- (v2);
		\draw (v2) -- (v3);
		\draw (v3) -- (v4);
		\draw (v4) -- (v5);
		\draw (v5) -- (v1);

		% Add a loop to vertex 6
		\draw (v6) to [out=135, in=225, loop] (v6);
		
	\end{tikzpicture}
	\end{center}
	We call this the $(5,1)$ configuration. In general if $T$ has $k$ points and $3d-k$ tangents we call the graph the $(k,3d-k)$ configuration.

For $d=3$ and $T$ corresponding to $9$ double points, there are precisely two $3$-regular graphs on $6$ vertices. One of them is a bipartite  graph $B_{3,3}$ on six vertices. 

\begin{center}\hypertarget{90a}{}
\begin{tikzpicture}[scale=0.5]
	% Define the vertices
	\node[circle, draw, fill=blue!20] (v1) at (0,0) {1};
	\node[circle, draw, fill=blue!20] (v2) at (2,0) {2};
	\node[circle, draw, fill=blue!20] (v3) at (3,2) {3};
	\node[circle, draw, fill=blue!20] (v4) at (2,4) {4};
	\node[circle, draw, fill=blue!20] (v5) at (0,4) {5};
	\node[circle, draw, fill=blue!20] (v6) at (-1,2) {6};
	
	% Draw the edges
	\draw (v1) -- (v2);
	\draw (v1) -- (v4);
	\draw (v1) -- (v6);
	\draw (v3) -- (v2);
	\draw (v3) -- (v4);
	\draw (v3) -- (v6);
	\draw (v5) -- (v2);
	\draw (v5) -- (v4);
	\draw (v5) -- (v6);
	
	% Add a loop to vertex 6
	%\draw (v6) to [out=135, in=225, loop] (v6);
\end{tikzpicture}

$(9,0)a$

Bipartite graph with the two sets of vertices being $(v_1,v_3,v_5)$ and $(v_2,v_4,v_6)$
\end{center}

However, computation with the bipartite graphs gives a cubic with discriminant $0$! The reason for this is that  the nine points correspond to the intersection of the two cubics $C_1=l_1l_3l_5$ and $C_2=l_2l_4l_6$. The Cayley-Bacharach theorem  says that in this case any cubic passing through  $8$ of the  points will necessarily pass through  the $9^{th}$. So if we consider a rational cubic passing through $8$ of these points it will always pass through the remaining point. 

The second $(9,0)$ configuration is the following. 

\begin{center}\hypertarget{90b}{}
	
\begin{tikzpicture}[scale=0.5]
	% Define the vertices
	\node[circle, draw, fill=blue!20] (v1) at (0,0) {1};
	\node[circle, draw, fill=blue!20] (v2) at (2,0) {2};
	\node[circle, draw, fill=blue!20] (v3) at (3,2) {3};
	\node[circle, draw, fill=blue!20] (v4) at (2,4) {4};
	\node[circle, draw, fill=blue!20] (v5) at (0,4) {5};
	\node[circle, draw, fill=blue!20] (v6) at (-1,2) {6};
	
	% Draw the edges
	\draw (v1) -- (v2);
	\draw (v1) -- (v3);
	\draw (v1) -- (v6);
	\draw (v2) -- (v5);
	\draw (v2) -- (v3);
	\draw (v3) -- (v4);
	\draw (v5) -- (v4);
	\draw (v5) -- (v6);
	\draw (v6) -- (v4);

	% Add a loop to vertex 6
	%\draw (v6) to [out=135, in=225, loop] (v6);
\end{tikzpicture}

$(9,0)b$

\end{center}
This corresponds to 
$$T_{(9,0)}=(q_{12},q_{13},q_{16},q_{23},q_{25},q_{34},q_{45},q_{46},q_{56})$$
Of course a relabeling of the vertices will give another admissible tuple.

\subsection{Classification of Graphs}
 
There may be several non-isomorphic graphs corresponding to the same tuple $(d,k)$. In this section we classify all the possibilities for $2$ and $3$ regular graphs on $6$ vertices. 

We use  the built-in graph modules in SAGE.  In order to classify, we shall ignore labeling. This reduces to looking at the degree sequence for each class of graphs. Denote the degree sequence of a graph on $6$ vertices as $[d_1,d_2,..,d_6]$ where $d_i$ is the degree of the vertex corresponding to singular line $l_i$, $1\leq i\leq 6$.
 
\subsubsection{2-Regular Graphs}

For a $2$-regular graph with one loop on one of the vertices, choose a vertex and assign a loop. The remaining $5$ vertices forms a connected $2$ regular graph. Consider the following code.
\begin{verbatim}
	# List to store all unique 2-regular graphs on 5 vertices
	two_regular_graphs = []
	
	# Generate all connected graphs with 5 vertices
	for G in graphs.nauty_geng("5 -c"):
	# Check if it's 2-regular
	if all(deg == 2 for deg in G.degree()):
	two_regular_graphs.append(G)
	
	# Output
	print(f"Total 2-regular graphs on 5 vertices: {len(two_regular_graphs)}")
	for i, g in enumerate(two_regular_graphs):
	print(f"\nGraph {i+1}:")
	show(g)
\end{verbatim}
This gives us that there is only one $2$-regular graph on $5$ vertices upto isomorphism.

If there are $2$ loops on two isolated vertices, the remaining $4$ vertices must form a connected $2$ regular graph. We can modify the above code, to see that there is only one $2$-regular graph on $4$ vertices. And similarly, we find only one $2$-regular graph on $3$ vertices.

\begin{figure}[H]
	\centering
	% First subfigure
	\begin{subfigure}{0.3\textwidth}
		\centering
		\begin{tikzpicture}[scale=0.5]
			% Define the vertices
			\node[circle, draw, fill=blue!20] (v1) at (0,0) {1};
			\node[circle, draw, fill=blue!20] (v2) at (2,0) {2};
			\node[circle, draw, fill=blue!20] (v3) at (3,2) {3};
			\node[circle, draw, fill=blue!20] (v4) at (2,4) {4};
			\node[circle, draw, fill=blue!20] (v5) at (0,4) {5};
			\node[circle, draw, fill=blue!20] (v6) at (-1,2) {6};
			
			% Draw the edges
			\draw (v1) -- (v2);
			\draw (v1) -- (v5);
			%\draw (v1) -- (v6);
			%\draw (v2) -- (v5);
			\draw (v2) -- (v3);
			\draw (v3) -- (v4);
			\draw (v5) -- (v4);
			%\draw (v5) -- (v3);
			%\draw (v2) -- (v4);

			% Add a loop to vertex 6
			\draw (v6) to [out=135, in=225, loop] (v6);
		\end{tikzpicture}
		\caption{(5,1)}
		\label{51}
	\end{subfigure}
	\hfill
	% Second subfigure
	\begin{subfigure}{0.3\textwidth}
		\centering
		\begin{tikzpicture}[scale=0.5]
			% Define the vertices
			\node[circle, draw, fill=blue!20] (v1) at (0,0) {1};
			\node[circle, draw, fill=blue!20] (v2) at (2,0) {2};
			\node[circle, draw, fill=blue!20] (v3) at (3,2) {3};
			\node[circle, draw, fill=blue!20] (v4) at (2,4) {4};
			\node[circle, draw, fill=blue!20] (v5) at (0,4) {5};
			\node[circle, draw, fill=blue!20] (v6) at (-1,2) {6};
			
			% Draw the edges
			\draw (v1) -- (v2);
			\draw (v1) -- (v4);
			%\draw (v1) -- (v6);
			%\draw (v2) -- (v5);
			\draw (v2) -- (v3);
			\draw (v3) -- (v4);
			%\draw (v5) -- (v4);
			%\draw (v5) -- (v3);
			%\draw (v2) -- (v4);

			% Add a loop to vertex 6
			\draw (v6) to [out=135, in=225, loop] (v6);
			\draw (v5) to [out=135, in=225, loop] (v5);
		\end{tikzpicture}
		\caption{(4,2)}
		\label{42}
	\end{subfigure}
	\hfill
	% Third subfigure
	\begin{subfigure}{0.3\textwidth}
		\centering
		\begin{tikzpicture}[scale=0.5]
			% Define the vertices
			\node[circle, draw, fill=blue!20] (v1) at (0,0) {1};
			\node[circle, draw, fill=blue!20] (v2) at (2,0) {2};
			\node[circle, draw, fill=blue!20] (v3) at (3,2) {3};
			\node[circle, draw, fill=blue!20] (v4) at (2,4) {4};
			\node[circle, draw, fill=blue!20] (v5) at (0,4) {5};
			\node[circle, draw, fill=blue!20] (v6) at (-1,2) {6};
			
			% Draw the edges
			\draw (v1) -- (v2);
			\draw (v1) -- (v3);
			%\draw (v1) -- (v6);
			%\draw (v2) -- (v5);
			\draw (v2) -- (v3);
			%\draw (v3) -- (v4);
			%\draw (v5) -- (v4);
			%\draw (v5) -- (v3);
			%\draw (v2) -- (v4);

			% Add a loop to vertex 6
			\draw (v6) to [out=135, in=225, loop] (v6);
			\draw (v5) to [out=135, in=225, loop] (v5);
			\draw (v4) to [out=45, in=-45, loop] (v4);
		\end{tikzpicture}
		\caption{(3,3)}
		\label{33}
	\end{subfigure}
	\caption{Isomorphism classes of 2 regular graphs with loops}
\end{figure}

\subsubsection{3-Regular Graphs}

{\bf Case 1: (9,0)} A slight modification of the code above gives us that there are two $3$-regular graphs on $6$ vertices upto isomorphism. One of them is bipartite and the other is not. Note that all graphs in this situation are connected. The two graphs above \hyperlink{90a}{bipartite} and \hyperlink{90b}{non-bipartite}.

\noindent{\bf Case 2: (8,1)} Choose a vertex and assign a loop to it. This vertex must connect to some other vertex via a single edge. Thus, we want to classify graphs upto isomorphism on the remaining $5$ vertices such that $4$ of them have degree $3$ and one vertex has degree $2$. The corresponding degree for the $5$ vertices is $[3, 3, 3, 3, 2]$. Consider the code
\begin{verbatim}
	# Desired degree sequence
	target_degrees = sorted([3, 3, 3, 3, 2])
	
	# Container for graphs matching the degree condition
	matching_graphs = []
	
	# Generate all graphs with 5 vertices
	for G in graphs.nauty_geng("5"):
	# Check if graph has 7 edges and matching degree sequence
	if G.size() == 7 and sorted(G.degree()) == target_degrees:
	# Check for isomorphism to avoid duplicates
	if not any(G.is_isomorphic(H) for H in matching_graphs):
	matching_graphs.append(G)
	
	# Output results
	print(f"Total graphs with degrees [3, 3, 3, 3, 2]: {len(matching_graphs)}")
	for i, g in enumerate(matching_graphs):
	print(f"\nGraph {i+1}:")
	show(g)
\end{verbatim}
This shows that there is only one such graph. All graphs in this situation are connected.

\begin{center}\label{81}
	\begin{tikzpicture}[scale=0.5]
		% Define the vertices
		\node[circle, draw, fill=blue!20] (v1) at (0,0) {1};
		\node[circle, draw, fill=blue!20] (v2) at (2,0) {2};
		\node[circle, draw, fill=blue!20] (v3) at (3,2) {3};
		\node[circle, draw, fill=blue!20] (v4) at (2,4) {4};
		\node[circle, draw, fill=blue!20] (v5) at (0,4) {5};
		\node[circle, draw, fill=blue!20] (v6) at (-1,2) {6};
		
		% Draw the edges
		\draw (v1) -- (v2);
		\draw (v1) -- (v5);
		\draw (v1) -- (v6);
		%\draw (v2) -- (v5);
		\draw (v2) -- (v3);
		\draw (v3) -- (v4);
		\draw (v5) -- (v4);
		\draw (v5) -- (v3);
		\draw (v2) -- (v4);

		% Add a loop to vertex 6
		\draw (v6) to [out=135, in=225, loop] (v6);
	\end{tikzpicture}
	
	Only possible graph of type $(8,1)$.
\end{center}

\noindent{\bf Case 3: (7,2)} Choose two vertices and assign them one loop each. If the two vertices containing a loop each are connected, then the remaining $4$ vertices must be connected and $3$-regular. In this situation, there is only one such graph. Each of the four vertices connect to other three vertices, this is the complete graph on four vertices. Thus there is only one such graph.

If two of the vertices with loops connect a single vertex among the remaining four, then the resulting graph can not be $3$-regular. There will always be a vertex with degree $2$.

Finally, if two vertices containing loops are adjacent to two distinct vertices among the remaining $4$, then we want to classify graphs on $4$ vertices with degree sequence $[3,3,2,2]$. Consider the following code. 
\begin{verbatim}
	# Target degree sequence
	target_degrees = sorted([3, 3, 2, 2])
	
	# Store matching graphs
	matching_graphs = []
	
	# Generate all graphs with 4 vertices
	for G in graphs.nauty_geng("4"):
	if G.size() == 5 and sorted(G.degree()) == target_degrees:
	# Avoid isomorphic duplicates
	if not any(G.is_isomorphic(H) for H in matching_graphs):
	matching_graphs.append(G)
	
	# Output the results
	print(f"Total graphs with degrees [3, 3, 2, 2]: {len(matching_graphs)}")
	for i, g in enumerate(matching_graphs):
	print(f"\nGraph {i+1}:")
	show(g)
\end{verbatim}
The above code gives us that there is only one such graph. To conclude, there are two $3$-regular graphs with $2$ loops, one is connected and the other one has two connected components. The graphs are depicted below.

\begin{figure}[H]
	\centering
	\begin{subfigure}{0.45\textwidth}
		\centering
		\begin{tikzpicture}[scale=0.5]
			% Define the vertices
			\node[circle, draw, fill=blue!20] (v1) at (0,0) {1};
			\node[circle, draw, fill=blue!20] (v2) at (2,0) {2};
			\node[circle, draw, fill=blue!20] (v3) at (3,2) {3};
			\node[circle, draw, fill=blue!20] (v4) at (2,4) {4};
			\node[circle, draw, fill=blue!20] (v5) at (0,4) {5};
			\node[circle, draw, fill=blue!20] (v6) at (-1,2) {6};
			
			% Draw the edges
			\draw (v1) -- (v2);
			\draw (v1) -- (v3);
			%\draw (v1) -- (v4);
			\draw (v2) -- (v3);
			\draw (v2) -- (v4);
			\draw (v3) -- (v4);
			
			\draw (v5) -- (v4);
			\draw (v6) -- (v1);
			%\draw (v6) -- (v4);

			% Add a loop to vertex 6
			\draw (v6) to [out=135, in=225, loop] (v6);
			\draw (v5) to [out=135, in=225, loop] (v5);

		\end{tikzpicture}
		\caption{(7,2)a}
		
	\end{subfigure}
	\hfill
	\begin{subfigure}{0.45\textwidth}
		\centering
		\begin{tikzpicture}[scale=0.5]
			% Define the vertices
			\node[circle, draw, fill=blue!20] (v1) at (0,0) {1};
			\node[circle, draw, fill=blue!20] (v2) at (2,0) {2};
			\node[circle, draw, fill=blue!20] (v3) at (3,2) {3};
			\node[circle, draw, fill=blue!20] (v4) at (2,4) {4};
			\node[circle, draw, fill=blue!20] (v5) at (0,4) {5};
			\node[circle, draw, fill=blue!20] (v6) at (-1,2) {6};
			
			% Draw the edges
			\draw (v1) -- (v2);
			\draw (v1) -- (v3);
			\draw (v1) -- (v4);
			\draw (v2) -- (v3);
			\draw (v2) -- (v4);
			\draw (v3) -- (v4);
			
			\draw (v5) -- (v6);
			%	\draw (v6) -- (v1);
			%\draw (v6) -- (v4);

			% Add a loop to vertex 6
			\draw (v6) to [out=135, in=225, loop] (v6);
			\draw (v5) to [out=135, in=225, loop] (v5);
			
		\end{tikzpicture}

		\caption{(7,2)b}
	\end{subfigure}
	
	\caption{Possible graphs of type $(7,2)$}
	\label{72}
\end{figure}

\noindent{\bf Remaining cases} 

In all the remaining cases - $(6,3), (5,4), (4,5)$ and $(3,6)$ there is precisely one graph for the each admissible type. Here is the argument for $(6.3)$. The rest are similar. 

Choose three vertices to assign loops to them. These three vertices can not be adjacent to a single vertex among the remaining three, then the remaining two vertices must have degree $2$.

The three vertices containing loops can not be adjacent to each other. If any two of these vertices are adjacent and the remaining vertex with a loop is adjacent to a vertex without loop, then necessarily this vertex will have degree two. Two of the vertices containing loops can not be adjacent to the same vertex among the ones without loop, then the resulting graph can not be $3$-regular.

So the only possibility is that the three vertices containing loops must be adjacent to the three remaining vertices each. Thus we want to classify graphs on three vertices with degree sequence $[2,2,2]$. This is the complete graph on three vertices. To conclude, there is only one $3$-regular graph on six vertices containing three loops. The remaining cases can be tackled in a similar manner. 

\begin{figure}[H]
	\centering
	\begin{subfigure}{0.45\textwidth}
		\centering
	\begin{tikzpicture}[scale=0.5]
		% Define the vertices
		\node[circle, draw, fill=blue!20] (v1) at (0,0) {1};
		\node[circle, draw, fill=blue!20] (v2) at (2,0) {2};
		\node[circle, draw, fill=blue!20] (v3) at (3,2) {3};
		\node[circle, draw, fill=blue!20] (v4) at (2,4) {4};
		\node[circle, draw, fill=blue!20] (v5) at (0,4) {5};
		\node[circle, draw, fill=blue!20] (v6) at (-1,2) {6};
		
		% Draw the edges
		\draw (v1) -- (v2);
		\draw (v1) -- (v3);
		\draw (v2) -- (v3);
		%\draw (v2) -- (v5);
		\draw (v2) -- (v5);
		\draw (v1) -- (v6);
		\draw (v3) -- (v4);
		%\draw (v5) -- (v3);
		%\draw (v2) -- (v4);

		% Add a loop to vertex 6
		\draw (v6) to [out=135, in=225, loop] (v6);
		\draw (v5) to [out=135, in=225, loop] (v5);
		\draw (v4) to [out=45, in=-45, loop] (v4);
	\end{tikzpicture}
	\label{63}
	\caption{Only graph of type $(6,3)$} 
\end{subfigure}
\hfill
\begin{subfigure}{0.45 \textwidth}
\centering

	\begin{tikzpicture}[scale=0.5]
		% Define the vertices
		\node[circle, draw, fill=blue!20] (v1) at (0,0) {1};
		\node[circle, draw, fill=blue!20] (v2) at (2,0) {2};
		\node[circle, draw, fill=blue!20] (v3) at (3,2) {3};
		\node[circle, draw, fill=blue!20] (v4) at (2,4) {4};
		\node[circle, draw, fill=blue!20] (v5) at (0,4) {5};
		\node[circle, draw, fill=blue!20] (v6) at (-1,2) {6};
		
		% Draw the edges
		\draw (v1) -- (v2);
		\draw (v1) -- (v5);
		\draw (v1) -- (v6);
		\draw (v2) -- (v3);
		\draw (v2) -- (v4);
		%\draw (v1) -- (v6);
		%\draw (v3) -- (v4);
		%\draw (v5) -- (v3);
		%\draw (v2) -- (v4);

		% Add a loop to vertex 6
		\draw (v6) to [out=135, in=225, loop] (v6);
		\draw (v5) to [out=135, in=225, loop] (v5);
		\draw (v4) to [out=45, in=-45, loop] (v4);
		\draw (v3) to [out=45, in=-45, loop] (v3);
	\end{tikzpicture}
\caption{Only graph of type (5,4)}
\label{54}
\end{subfigure}

\end{figure}

\begin{figure}[H]
\centering
\begin{subfigure}{0.45 \textwidth}
\centering

	\begin{tikzpicture}[scale=0.5]
		% Define the vertices
		\node[circle, draw, fill=blue!20] (v1) at (0,0) {1};
		\node[circle, draw, fill=blue!20] (v2) at (2,0) {2};
		\node[circle, draw, fill=blue!20] (v3) at (3,2) {3};
		\node[circle, draw, fill=blue!20] (v4) at (2,4) {4};
		\node[circle, draw, fill=blue!20] (v5) at (0,4) {5};
		\node[circle, draw, fill=blue!20] (v6) at (-1,2) {6};
		
		% Draw the edges
		\draw (v1) -- (v2);
		\draw (v1) -- (v3);
		\draw (v1) -- (v4);
		\draw (v5) -- (v6);
		%\draw (v2) -- (v4);
		%\draw (v1) -- (v6);
		%\draw (v3) -- (v4);
		%\draw (v5) -- (v3);
		%\draw (v2) -- (v4);

		% Add a loop to vertex 6
		\draw (v6) to [out=135, in=225, loop] (v6);
		\draw (v5) to [out=135, in=225, loop] (v5);
		\draw (v4) to [out=45, in=-45, loop] (v4);
		\draw (v3) to [out=45, in=-45, loop] (v3);
        \draw (v2) to [out=45, in=-45, loop] (v2);

	\end{tikzpicture}

\caption{Only graph of type $(4,5)$}
\label{45}
\end{subfigure}
\hfill
\begin{subfigure}{0.45 \textwidth} 
\centering
	\begin{tikzpicture}[scale=0.5]
		% Define the vertices
		\node[circle, draw, fill=blue!20] (v1) at (0,0) {1};
		\node[circle, draw, fill=blue!20] (v2) at (2,0) {2};
		\node[circle, draw, fill=blue!20] (v3) at (3,2) {3};
		\node[circle, draw, fill=blue!20] (v4) at (2,4) {4};
		\node[circle, draw, fill=blue!20] (v5) at (0,4) {5};
		\node[circle, draw, fill=blue!20] (v6) at (-1,2) {6};
		
		% Draw the edges
		\draw (v1) -- (v2);
		\draw (v3) -- (v4);
		\draw (v5) -- (v6);
		%\draw (v2) -- (v3);
		%\draw (v2) -- (v4);
		%\draw (v1) -- (v6);
		%\draw (v3) -- (v4);
		%\draw (v5) -- (v3);
		%\draw (v2) -- (v4);

		% Add a loop to vertex 6
		\draw (v1) to [out=135, in=225, loop] (v1);
		\draw (v2) to [out=45, in=-45, loop] (v2);
		\draw (v3) to [out=45, in=-45, loop] (v3);
		\draw (v4) to [out=45, in=-45, loop] (v4);
		\draw (v5) to [out=135, in=225, loop] (v5);
		\draw (v6) to [out=135, in=225, loop] (v6);

	\end{tikzpicture}
\label{36}
\caption{Only graph of type $(3,6)$} 
\end{subfigure}

\end{figure}

\section{Computations}

In this section we use ideas of the previous sections to compute the equations. The idea is the following. We want to find a rational curve of degree $d$ passing through $k$ points and meeting the lines of the sextic $S$ at $3d-k-1$ points of even multiplicity - where we count points of multiplicity $2r$ $r$ times. We know such a curve always exists and it will meet $S$ at two other points. The condition we want is that the two points coincide - so that $S$ and the rational degree $d$ curve meet at $3d$ double points. 

For $d>2$ it seems to be computationally easier to do the following. Find conditions on the degree $d$ polynomials so that the corresponding curves pass  through $3d$ double points. Then impose further conditions so that the curve is rational.  

A homogeneous equation of degree $d$ in $3$ variables has $N=\binom{d+2}{2}=\frac{(d+1)(d+2)}{2}$ coefficients, so the space of degree $d$ curves is $N-1$ dimensional. 

The condition that such a curve passes through a point determines a linear relation among the coefficients and so the dimension drops by $1$. The condition that points are in `general position' translates to the fact that the corresponding linear relations are linearly independent. Hence the space of degree $d$ curves passing through $k$ points in general position is of dimension $N-k-1$. 

The condition that it is tangent to a line also drops the dimension by $1$. Hence the space of degree $d$ curves passing through $k$ points and tangent to $3d-k$ lines is of dimension $\binom{d+2}{2}=\frac{(d+1)(d+2)}{2}-(k+(3d-k))=\frac{(d-1)(d-2)}{2}$. Note that this is the genus of a smooth curve of degree $d$. We then need to impose conditions that the curves are rational - that is, have geometric genus $0$. This adds a further $g$ conditions resulting in a $0$ dimensional set of rational curves passing through the $3d$ double points. We see that this imposes conditions on the coefficients - which are computable in terms of $a_1, a_2$ and $a_3$. 

We can equally well first perform the blowing up to get conditions under which a degree $d$ curve is rational. This will a $3d-1$ dimensional space. Then use the other conditions to ensure that the curves pass through the required number of points and are tangent to the required number of lines.

The algorithm works particularly well in degree $3$. If $d=3$ then $\binom{d+2}{2}-1=3d=9$ so there exist a unique cubic curve through $9$ points. However, in general it  not {\em rational}. In order to get rationality we have to impose one more condition - that the cubic is singular. This is guaranteed by the vanishing of the discriminant of the cubic. Hence the algorithm is --

\vspace{\baselineskip}

\noindent  Step 1. Compute the equation of the {\em possibly smooth cubic} through the nine points of an admissible tuple. The coefficients will be determined by $a_1, a_2$ and $a_3$. 

\noindent Step 2. Compute the discriminant of this cubic to get a polynomial $F$ in $a_1, a_2$ and $a_3$. When this is $0$ the corresponding Jacobian will have real multiplication.

We recall some necessary background and theory and then illustrate the algorithm using SAGE.
\subsection{Plane Curves}
\label{planecurves}

The material in this section is standard and can be found in an textbook on algebraic geometry or algebraic curves, for instance Hartshorne \cite{hart} or Fulton \cite{fult}.

A {\em plane curve}  $C$ of degree $d$  is the zero locus in $\CP^2$ of an equation of the following form
\begin{align}
\label{eq1}
f:\sum_{i+j+k=d} c_{ijk} x^{i}y^{j}z^{j}=c_{d00}x^d+c_{d-110}x^{d-a}y^1+\dots  +c_{00d}z^{d}
\end{align}
where $c_{ijk}\in\C$ for $i=1,2,...,10$.
$$C=V(f)=\{[x_0,y_0,z_0] \in \CP^2|f(x_0,y_0,z_0)=0\}$$
Let  \[f_s:=\frac{\partial f}{\partial s}\] be the formal partial derivative of $f$ with respect to the variable $s$. 
%If $p\in\CP^2$ is a node on $f$, then $f(p)=f_x(p)=f_y(p)=f_z(p)=0$. This is a consequence of the following
\begin{thm}[Euler's Identity]
\label{th5.1}
    Let $f\in\C[x_1,...,x_n]$ be a non-constant homogeneous polynomial of degree $d>0$. Then the following identity
    \begin{align*}
        d \cdot f = x_1f_{x_1}+\cdots+x_nf_{x_n}
    \end{align*}
    holds.
\end{thm}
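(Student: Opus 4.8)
The plan is to prove Euler's Identity by reducing to the case of a single monomial and then invoking linearity, since both sides of the claimed identity are $\C$-linear in $f$. First I would write $f = \sum_{|e|=d} c_e\, x^e$, where $e=(e_1,\dots,e_n)$ ranges over tuples of non-negative integers with $e_1+\cdots+e_n=d$ and $x^e$ denotes the monomial $x_1^{e_1}\cdots x_n^{e_n}$. Because the formal partial derivative $f\mapsto f_{x_i}$ is a $\C$-linear operator and multiplication by $x_i$ is $\C$-linear, the operator $f\mapsto \sum_{i=1}^n x_i f_{x_i}$ is $\C$-linear; so is $f\mapsto d\cdot f$. It therefore suffices to verify the identity when $f=x^e$ is a single monomial of degree $d$.

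Second, I would compute directly on a monomial $m=x_1^{e_1}\cdots x_n^{e_n}$. By the power rule for formal differentiation, $\frac{\partial m}{\partial x_i}=e_i\, x_i^{e_i-1}\prod_{j\neq i} x_j^{e_j}$ when $e_i\geq 1$, and $\frac{\partial m}{\partial x_i}=0$ when $e_i=0$; in either case $x_i\frac{\partial m}{\partial x_i}=e_i\, m$. Summing over $i$ then gives $\sum_{i=1}^n x_i\frac{\partial m}{\partial x_i}=\big(\sum_{i=1}^n e_i\big)\, m=d\cdot m$, using that $m$ has degree $d$. By the linearity established above this extends from monomials to arbitrary $f$, completing the argument.

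There is essentially no hard step here; the identity is elementary. The only point requiring care is to argue entirely with the \emph{formal} derivative, so that the conclusion is an identity of polynomials over $\C$ valid independently of any analytic notion of limit — this is why I prefer the monomial computation, whose sole ingredient is the formal power rule, over the alternative analytic argument. For completeness I would record that alternative: regard $f(tx_1,\dots,tx_n)$ as a polynomial in $\C[t,x_1,\dots,x_n]$, use homogeneity to write it as $t^d f(x_1,\dots,x_n)$, apply the formal derivative $\frac{\partial}{\partial t}$ to both sides (the chain rule holding as a polynomial identity in $t$), and finally substitute $t=1$; this recovers $\sum_{i=1}^n x_i f_{x_i}=d\cdot f$ in one stroke. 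Either route yields the result.
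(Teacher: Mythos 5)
Your proposal is correct and follows essentially the same route as the paper: reduce by linearity of the formal partial derivative to the case of a single monomial, observe that $x_i\,\partial m/\partial x_i = e_i\,m$, and sum over $i$ to get $d\cdot m$. The alternative argument via differentiating $f(tx_1,\dots,tx_n)=t^d f$ is a fine extra remark but is not needed.
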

\begin{proof}
    The formal partial derivative operator is linear with respect to addition, that is, for any two polynomials $g_1,g_2\in\C[x_1,...,x_n]$, $(g_1+g_2)_{x_i}={g_1}_{x_i}+{g_2}_{x_i}$. Hence it suffices to check this identity on a monomial of degree $d$.\\
    Let $f=Cx_1^{i_1}\cdots x_n^{i_n}$ with $i_1+\cdots i_n=d$ and some constant $C\in\C$. With respect to some variable $x_j$, $x_j\cdot f_{x_j}=i_j\cdot Cx_1^{i_1}\cdots x_n^{i_n}=i_j\cdot f$. Summing over $j=1,...,n$ we get $\sum_{j=1}^{n}x_jf_{x_j}=\sum_{j=1}^{n}i_jf=d\cdot f$
\end{proof}
We need this result for the following simple corollary.

\begin{cor}
\label{cor5.2}
    If all partial derivatives of a non-constant polynomial $f\in\C[x,y,z]$ vanish at some $p\in\CP^2(\C)$ then necessarily $f(p)=0$ as well.
\end{cor}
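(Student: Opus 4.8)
The plan is to deduce this directly from Euler's Identity (Theorem \ref{th5.1}), which is precisely why that result was isolated beforehand. Since $f$ is a non-constant homogeneous polynomial defining a plane curve, its degree $d$ satisfies $d>0$. Applying Theorem \ref{th5.1} in the three variables $x,y,z$ yields the polynomial identity
$$d\cdot f = x f_x + y f_y + z f_z$$
in $\C[x,y,z]$, valid before any evaluation.

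The next step is simply to evaluate both sides at the point $p=[x_0:y_0:z_0]$. By hypothesis every partial derivative vanishes at $p$, so $f_x(p)=f_y(p)=f_z(p)=0$, and the entire right-hand side collapses to $0$. Hence $d\cdot f(p)=0$. Because $d>0$ and we work over $\C$, a field of characteristic $0$, the integer $d$ is invertible, and dividing through gives $f(p)=0$, as claimed.

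There is no genuine obstacle here; the only point deserving a moment's attention is that non-constancy forces $d>0$, and this is exactly what licenses the division by $d$. Indeed, a nonzero constant would have all partial derivatives vanishing identically while $f$ itself never vanishes, so the hypothesis $d>0$ is essential and not merely cosmetic. I would also remark, for cleanliness, that evaluating $f$ and its partials at the projective point $p$ is well posed precisely because $f$ (and hence each $f_x,f_y,f_z$) is homogeneous, so that whether these forms vanish at $p$ is independent of the chosen homogeneous representative.
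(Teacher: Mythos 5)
Your proof is correct and is exactly the argument the paper intends: it introduces Euler's Identity immediately before this corollary precisely so that $d\cdot f = xf_x+yf_y+zf_z$ can be evaluated at $p$, giving $d\cdot f(p)=0$ and hence $f(p)=0$ since $d>0$. Your added remarks on non-constancy and on well-posedness of evaluation at a projective point are accurate but not needed beyond what the paper assumes.
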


\noindent The {\em geometric genus} or {\em topological genus} of a plane curve is defined to be $g=g_g(C)=\dim_{\C} H^0(C,\Omega^1_{C})$. The {\em arithmetic genus} is defined to be $g_a(C)=\dim_{\C} H^1(C,\OO_C)$. While the arithmetic genus is constant in families the geometric genus could change. For a smooth curve $g_a(C)=g_g(C)$ but in general $g_a(C) \geq g_g(C)$ 

We have the following degree-genus formula for planar curves 

\begin{thm}
\label{degree-genus}
    Let $C$ be an irreducible planar curve defined over $\C$ of degree $d$ in $\CP^2(\C)$ with only ordinary singularities.

    \begin{align}
    \label{deggen}
        g_g(C)=\frac{(d-1)(d-2)}{2}-\sum_{x \in Sing(C)} \frac{1}{2}r_x(r_x-1)
    \end{align}
    where $r_x$ is the order of the singularity at $x$.
\end{thm}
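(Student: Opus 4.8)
The plan is to compute the difference between the arithmetic and geometric genus and to show that it is localized at the singular points. Since all plane curves of degree $d$ form the connected linear system $\CP^N$ with $N=\binom{d+2}{2}-1$, and the arithmetic genus is constant in families, $g_a(C)$ agrees with the genus of a smooth degree $d$ curve; alternatively, from the ideal sheaf sequence $0\to\OO_{\CP^2}(-d)\to\OO_{\CP^2}\to\OO_C\to 0$ and the cohomology of line bundles on $\CP^2$ one reads off directly that $g_a(C)=\frac{(d-1)(d-2)}{2}$ for \emph{every} plane curve of degree $d$. It therefore suffices to establish
$$g_a(C)-g_g(C)=\sum_{x\in\operatorname{Sing}(C)}\tfrac12\,r_x(r_x-1).$$

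Next I would introduce the normalization $\nu:\TC\to C$. As $C$ is irreducible, $\TC$ is a smooth connected projective curve birational to $C$, so its genus is exactly $g_g(C)$ and $g_g(C)=\dim_\C H^1(\TC,\OO_{\TC})$. The comparison of the two genera is governed by the short exact sequence of sheaves on $C$
$$0\longrightarrow \OO_C\longrightarrow \nu_*\OO_{\TC}\longrightarrow \mathcal S\longrightarrow 0,$$
where $\mathcal S$ is a skyscraper sheaf supported on $\operatorname{Sing}(C)$ with stalk $\mathcal S_x=(\nu_*\OO_{\TC})_x/\OO_{C,x}$. Because $\nu$ is finite (hence affine), $H^i(C,\nu_*\OO_{\TC})=H^i(\TC,\OO_{\TC})$, and comparing Euler characteristics in the sequence collapses everything to
$$g_a(C)-g_g(C)=\dim_\C H^0(C,\mathcal S)=\sum_{x\in\operatorname{Sing}(C)}\delta_x,\qquad \delta_x:=\dim_\C\!\bigl((\nu_*\OO_{\TC})_x/\OO_{C,x}\bigr).$$
This reduces the theorem to the purely local claim that an ordinary singularity of multiplicity $r_x$ has $\delta_x=\binom{r_x}{2}$.

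The main obstacle, and the only place where the \emph{ordinary} hypothesis enters, is this last local computation. At an ordinary singularity of order $r_x$ the curve has $r_x$ smooth local branches $C_1,\dots,C_{r_x}$ with pairwise distinct tangent directions. Using the standard decomposition of the $\delta$-invariant into branch and pairwise-intersection contributions,
$$\delta_x=\sum_{i=1}^{r_x}\delta_x(C_i)+\sum_{1\le i<j\le r_x}(C_i\cdot C_j)_x,$$
each branch is smooth so $\delta_x(C_i)=0$, while distinct tangents force each pair of branches to meet transversally, giving $(C_i\cdot C_j)_x=1$. Hence $\delta_x=\binom{r_x}{2}=\frac12 r_x(r_x-1)$; as a sanity check, an ordinary node ($r_x=2$) contributes $1$. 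Substituting into the displayed identity and using $g_a(C)=\frac{(d-1)(d-2)}{2}$ yields the degree-genus formula \eqref{deggen}. The $\delta$-decomposition above can itself be verified by an explicit monomial count in the complete local ring $\C[[x,y]]/\prod_i(y-\lambda_i x)$ with the $\lambda_i$ distinct (equivalently, by identifying the conductor), which I would include for completeness.
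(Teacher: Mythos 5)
Your argument is correct. The paper gives no proof of this theorem at all --- it simply cites the literature (Hartshorne), where the formula is obtained by blowing up the ordinary singular points and applying adjunction on the resulting surface --- so your normalization/$\delta$-invariant proof is a genuinely different, self-contained route. The three steps are all sound: $g_a(C)=\frac{(d-1)(d-2)}{2}$ from the ideal-sheaf sequence on $\CP^2$; the reduction $g_a(C)-g_g(C)=\sum_x\delta_x$ via $0\to\OO_C\to\nu_*\OO_{\TC}\to\mathcal S\to 0$, finiteness of $\nu$, and Euler characteristics; and the local evaluation $\delta_x=\binom{r_x}{2}$ at an ordinary point. The only place needing care is the last step. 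First, you should note explicitly that an ordinary $r$-fold point really does have $r$ smooth pairwise-transverse branches: the leading form $\prod_i(y-\lambda_i x)$ with distinct $\lambda_i$ lifts to a factorization of the local equation in $\C[[x,y]]$ by Hensel/Weierstrass, each factor having a linear leading form. Second, your proposed ``explicit monomial count in $\C[[x,y]]/\prod_i(y-\lambda_i x)$'' computes $\delta$ for the tangent cone, and for $r\geq 4$ the germ need not be analytically isomorphic to its tangent cone; the clean way to finish is exactly the branch decomposition $\delta_x=\sum_i\delta_x(C_i)+\sum_{i<j}(C_i\cdot C_j)_x$ you invoke (or the conductor), since that depends only on the branches and their pairwise intersection numbers and yields $\binom{r_x}{2}$ in all cases. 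With that adjustment the proof is complete, and arguably more informative than the citation the paper offers.
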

\begin{proof} This can be found in numerous places, for instance  Corollary 2.4, \cite{hart} \end{proof}

A curve $C$ defined over $\C$ is called \emph{rational} if the $g_g(C)=0$. From the formula \eqref{deggen} curves of degree $1$ and $2$ are necessarily rational. If $C$ is smooth of degree $3$ it is an elliptic curve - as it always has a rational point over $\C$. However, a cubic curve with one ordinary node ($r=2$) or a cusp is also rational. Since the genus is non-negative, an irreducible cubic curve can have at most one singular point. 

Another classical theorem that we will need is the following:
\begin{thm}[Bezout]
\label{bezout}
    Let $C_1$ and $C_2$ be plane curves of degree $d_1$ and $d_2$ respectively. Then the number of points of intersection 
    $$\#\{C_1\cap C_2\}=d_1d_2$$
    counting multiplicities.
\end{thm}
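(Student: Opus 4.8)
The plan is to prove B\'ezout's theorem by the classical method of resultants, reducing the count of intersection points to a degree computation for a single polynomial in one variable; throughout I assume $C_1$ and $C_2$ share no common component, since otherwise the intersection is infinite and the clean equality does not apply. First I would choose projective coordinates so that the point $[0:1:0]$ is in general position relative to the configuration: it should lie on neither curve, on no line joining two of the finitely many points of $C_1 \cap C_2$, and no intersection point should lie on the line at infinity $z = 0$. Only finitely many lines through $[0:1:0]$ must be avoided, so such a choice exists. Dehomogenizing by $z = 1$ yields affine polynomials $f(x,y)$ and $g(x,y)$ which, viewed as polynomials in $y$ over $\C[x]$, have $y$-degrees exactly $d_1$ and $d_2$, their leading coefficients being nonzero constants precisely because $[0:1:0] \notin C_i$.

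Then I would introduce the resultant $R(x) = \res_y(f,g) \in \C[x]$ and exploit its two defining properties: $R(x_0) = 0$ if and only if $f(x_0,y)$ and $g(x_0,y)$ share a common root, that is, the vertical line $x = x_0$ meets $C_1 \cap C_2$; and, crucially, $\deg R = d_1 d_2$. The degree computation is the structural core: writing $f = \sum_{i=0}^{d_1} a_i(x)\, y^{d_1 - i}$ with $\deg a_i \le i$ and $a_0 \in \C^{\times}$, and similarly for $g$, a weighted-degree count of the $(d_1+d_2)\times(d_1+d_2)$ Sylvester determinant gives $\deg R \le d_1 d_2$, with equality guaranteed by the general-position choice. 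Since each point of $C_1 \cap C_2$ now has its own distinct $x$-coordinate, the roots of $R$ are in bijection with the intersection points, and summing their multiplicities recovers $\deg R = d_1 d_2$.

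The main obstacle, and the step I expect to demand the most care, is identifying the multiplicity of $x_0$ as a root of $R$ with the intersection multiplicity $I_p(C_1,C_2) := \dim_{\C} \OO_{\CP^2,p}/(f,g)$ at the point $p$ lying over $x_0$. I would prove the local equality $\ord_{x_0} R = I_p(C_1,C_2)$ by passing to the local ring at $p$, using the generic choice to guarantee a single intersection point in each vertical fibre, and comparing the local factorization of the resultant with the length of $\OO_{\CP^2,p}/(f,g)$; this is precisely where one must show that the algebraic multiplicity of the resultant faithfully records the local intersection data, not merely detects it. A cleaner but more machinery-heavy alternative would bypass this bookkeeping via the intersection pairing on $\Pic(\CP^2)$, computing $\OO(d_1) \cdot \OO(d_2) = d_1 d_2$ directly from $\Pic(\CP^2) \cong \ZZ$; I would retreat to that formulation only if the resultant multiplicity comparison proved unwieldy.
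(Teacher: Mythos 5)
Your resultant argument is correct in outline and is a genuinely different route from the paper, which does not prove B\'ezout at all but simply cites Hartshorne (Corollary 7.8 of Chapter I), where the theorem is obtained from the Hilbert polynomial of the graded module $S/(F,G)$: the intersection multiplicity is \emph{defined} there as the length of the local ring $\OO_{p}/(F,G)$, and the count $d_1d_2$ falls out of the degree of the Hilbert polynomial with no coordinate choices or resultants. Your approach is more elementary and self-contained: the general-position normalization (the centre of projection $[0:1:0]$ off both curves, distinct $x$-coordinates for distinct intersection points, nothing at infinity) is handled correctly, and the degree computation $\deg \res_y(f,g)=d_1d_2$ is right, though the cleanest way to get the equality (not just the upper bound) is to note that the homogeneous resultant of the forms $F,G$ in $y$ is homogeneous of degree $d_1d_2$ in $(x,z)$, is not identically zero since the curves share no component, and loses no degree upon setting $z=1$ because no intersection point lies at infinity. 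The price of your route is exactly the step you flag: proving $\ord_{x_0}\res_y(f,g)=\dim_\C\OO_{\CP^2,p}/(f,g)$ is the real content, and it is not shorter than the graded-module argument it replaces; it is standard (it appears in Fulton's \emph{Algebraic Curves} and in Brieskorn--Kn\"orrer) but requires either the multiplicativity of the resultant under factorization into Weierstrass-type local factors or a direct length computation in $\C[[x]][y]$. Since the paper only needs the statement as a black box for counting intersections of a rational curve with the six lines, either proof suffices; yours would be the one to include if a self-contained exposition were wanted.
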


\begin{proof}
(see corollary 7.8, \cite{hart}).

\end{proof}

Let $C_1$ be of degree $d_1=3$ and $C_2$ be of degree $d_2=1$ (a line). Then $\#\{C_1\cap C_2\}=3$. We have the following three possible configurations for the points of intersections of $C_1$ and $C_2$
\begin{enumerate}
    \item All three points of intersection are transversal
    \item One point of transversal intersection and another point of multiplicity $2$
    \item All three points coincide
\end{enumerate}

Recall that for our calculations we need to find the cubic passing through $8$ points of multiplicity $2$ on the sextic $S=\bigcup_{i=1}^6 l_i$. Since we require that the cubic and sextic meet only at points of even multiplicity or transversally  at two other points, Case 3 cannot occur.

\subsection{Resultants}
\label{resultants}
The material of this section can be found in literature in many places. For example, Chapter 12 and Chapter 13 of \cite{GeKZ} and also Chapter 3 of \cite{CLO}.

Let $m\geq 1$ be an integer and $f_0,f_1,...,f_m\in\C[t_0,...,t_m]$ be homogeneous polynomials in $m+1$ variables and of degrees $d_0,...,d_m$ respectively. The \emph{resultant} of these $m+1$ forms, denoted $\res_{d_0,...,d_m}(f_0,...,f_m)$ (or simply $\res(f_0,...,f_m)$), is an irreducible polynomial over $\Z$ in the coefficients of the $f_i$'s. This  has the property that 
$$\res_{d_0,...,d_m}(f_0,...,f_m)=0 \Leftrightarrow \{f_0,...,f_m\} \text{ have a common zero in } \C^{m+1}\backslash \{0\}$$

One can uniquely define the resultant by requiring that $\res_{d_0,...,d_m}(t_0^{d_0},...,t_m^{d_m})=1$ (cf. pg. 427, Chapter 13, \cite{GeKZ}). We note the following result.
\begin{prop}
\label{resultantdeg}
Let $m\geq 1$ be an integer and $f_0,f_1,...,f_m\in\C[t_0,...,t_m]$ be homogeneous polynomials in $m+1$ variables and of degrees $d_0,...,d_m$ respectively. The resultant is a homogeneous polynomial in the coefficients of each $f_i$ of degree $d_0 d_1 \cdots d_{i-1} d_{i+1}\cdots d_m$. In particular, the total degree of the resultant is $d_0\cdots d_m(d_0^{-1}+\cdots+d_m^{-1})$.
\end{prop}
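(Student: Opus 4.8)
The plan is to establish that $R := \res_{d_0,\ldots,d_m}(f_0,\ldots,f_m)$ is homogeneous in the coefficients of each $f_i$ separately, and to identify the degree of that homogeneity with the B\'ezout number $N_i := \prod_{j\neq i} d_j$; the total degree statement is then a one-line summation. For the homogeneity, note that scaling $f_i \mapsto \lambda f_i$ with $\lambda \in \C^{\times}$ leaves the zero locus of $f_i$ unchanged, hence does not affect whether the system $f_0 = \cdots = f_m = 0$ has a common zero in $\CP^m$. Thus $R$ and the polynomial obtained by rescaling the coefficient vector $c_i$ of $f_i$ by $\lambda$ cut out the same hypersurface; since $R$ is irreducible over $\Z$, so is its rescaling, and two irreducible polynomials defining the same hypersurface agree up to a scalar. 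Tracking that scalar as a function of $\lambda$ yields a multiplicative, polynomial character $\lambda \mapsto \lambda^{e_i}$, so $R$ is homogeneous of degree $e_i$ in $c_i$ for some integer $e_i \ge 0$.

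To compute $e_i$, I would freeze the forms $f_j$ for $j \neq i$ at generic values and view $R$ as a polynomial in $c_i$ alone. By the B\'ezout theorem in $\CP^m$ (the higher-dimensional analogue of Theorem \ref{bezout}) the $m$ forms $\{f_j\}_{j\neq i}$ meet in $N_i = \prod_{j\neq i} d_j$ points, and for a generic configuration these are distinct points $p_1,\ldots,p_{N_i}$ with transversal intersection. The full system acquires a common solution precisely when $f_i$ vanishes at one of the $p_k$, so the restricted resultant vanishes exactly on the union of hyperplanes $\bigcup_k \{\, c_i : f_i(p_k) = 0\,\}$. Each evaluation $c_i \mapsto f_i(p_k)$ is a nonzero linear form, and for distinct $p_k$ these are pairwise non-proportional, so $\prod_{k=1}^{N_i} f_i(p_k)$ is a square-free polynomial of degree $N_i$ having the same zero set as the restricted $R$; by the Nullstellensatz the two share a radical.

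The crux---and the step I expect to be the main obstacle---is upgrading this equality of radicals to the statement that each linear factor $f_i(p_k)$ divides $R$ exactly once, which is what yields $e_i = N_i$ rather than $e_i = a N_i$ for some unknown $a \ge 1$. I would attack this as follows. First, the monodromy of the family of generic configurations $\{f_j\}_{j\neq i}$ permutes the $N_i$ intersection points transitively, forcing all the multiplicities to equal a single value $a$, whence $e_i = a N_i$. It then remains to see $a = 1$, which is precisely the generic reducedness of the resultant hypersurface. I would either invoke it directly from \cite{GeKZ}, or prove it by a transversality computation showing that at a generic point of $\{R=0\}$ the hypersurface is smooth and the $c_i$-directions are not tangent to it, or reduce via the multiplicativity of the resultant in the $i$-th slot to the case $d_i = 1$, where $R$ is manifestly linear in the coefficients of the linear form $f_i$ and so $a = 1$ is forced.

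Granting $e_i = N_i = \prod_{j\neq i} d_j$, the total degree of $R$ in all the coefficients is obtained by summing:
\[
\sum_{i=0}^{m} e_i \;=\; \sum_{i=0}^{m}\ \prod_{j\neq i} d_j \;=\; d_0 d_1 \cdots d_m \left( d_0^{-1} + \cdots + d_m^{-1}\right),
\]
which is the asserted formula. The only genuinely delicate input is the multiplicity-one claim of the previous paragraph; everything else is B\'ezout plus elementary invariance under rescaling.
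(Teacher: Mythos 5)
Your overall architecture is sound and is, in essence, the standard proof: the paper itself offers no argument for this proposition, deferring entirely to \cite{GeKZ} (Prop.\ 1.1, Ch.\ 13), so your write-up supplies strictly more than the source does. The scaling-plus-irreducibility argument for homogeneity in each coefficient block is correct, the B\'ezout/incidence-variety computation identifying the restricted zero locus with a union of $\prod_{j\neq i}d_j$ pairwise non-proportional hyperplanes is correct (distinct points of $\CP^m$ give non-proportional evaluation functionals on degree-$d_i$ forms via the Veronese embedding), and the monodromy argument for equal multiplicities goes through because the incidence variety fibers over $\CP^m$ with irreducible fibers and is therefore irreducible. You are also right that the multiplicity-one statement is the genuine crux.

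One of your three proposed routes to $a=1$ is, however, broken: it is \emph{not} true that for $d_i=1$ the resultant is linear in the coefficients of the linear form $f_i$. Its degree in those coefficients is $\prod_{j\neq i}d_j$ --- which is exactly the quantity under dispute --- and already for $m=1$, $\res(f_0,f_1)$ with $\deg f_0=2$, $\deg f_1=1$ is quadratic in the coefficients of $f_1$ (two of the three rows of the Sylvester matrix carry them). The resultant is multilinear only when \emph{all} $d_j=1$, where it is the determinant of the coefficient matrix; and propagating $a=1$ from that case back to general degrees via multiplicativity requires factoring the $f_j$ for $j\neq i$ as well, at which point $f_j'f_j''$ is no longer a generic form of its degree and the degree in $c_i$ can only be bounded, not pinned down. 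So that reduction does not close. Your other two routes are fine: citing the reducedness of the resultant hypersurface from \cite{GeKZ} (legitimate, though then you are leaning on the same source the paper cites), or the transversality argument showing the generic point of each hyperplane $\{f_i(p_k)=0\}$ is a smooth point of $\{R=0\}$ met transversally by the slice $\{c_j=c_j^0,\ j\neq i\}$ --- that one should be carried out if you want a self-contained proof.
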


\begin{proof} Proposition 1.1, pg. 427, Chapter 13 of \cite{GeKZ}
\end{proof}

For two affine polynomials in one variable defined over an algebraically field of characteristic $0$ we have the following expression for the resultant. 
\begin{prop}
\label{resultantoftwopoly}
    Let $K$ be an algebraically closed field of characteristic $0$ and $f,g\in K[X]$ polynomials of degree $m,n$ respectively.
    \[f(x)=a_0+a_1x+\cdots a_mx^m\] and
    \[g(x)=b_0+b_1x+\cdots+b_nx^n\]
    Then the resultant is 
    \begin{align}
    \label{eq3}
       \res(f,g)= \det\begin{bmatrix}
    a_0 & a_1 & ... & ... & ... & ... & a_m & \\
     & a_0 & a_1 & ... & ... & ... & a_{m-1} & a_m & \\
    &  & a_0 & ... & ... & ... & a_{m-2} & a_{m-1} & a_m & \\
    & ....... &  &  & \\
    & ....... & \\
    b_0 & b_1 & b_2 & ... & ... & b_n & \\
     & b_0 & b_1 & ... & ... & b_{n-1} & b_n & \\
    &  & b_0 & ... & ... & b_{n-2} & b_{n-1} & b_n & \\
    & ....... &
    \end{bmatrix} 
    \end{align}
    Here there are $n$ rows of $a_i's$ and $m$ rows of $b_j's$ - so the matrix is a square matrix of rank $m+n$. 
    
\begin{center}    
    $f$ and $g$ have a common zero $\Leftrightarrow$ $\res(f,g)=0$.
    
    \end{center}
\end{prop}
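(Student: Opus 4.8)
The plan is to read the Sylvester matrix in \eqref{eq3} as the matrix of an explicit linear map, interpret the vanishing of its determinant as a statement about common factors, and then use algebraic closure to convert ``common factor'' into ``common zero.'' Concretely, I would consider the $K$-linear map
\[
\Phi : \{A \in K[X] : \deg A \le n-1\} \times \{B \in K[X] : \deg B \le m-1\} \longrightarrow \{P \in K[X] : \deg P \le m+n-1\},
\]
\[
\Phi(A,B) = Af + Bg.
\]
Both source and target have dimension $m+n$, and in the monomial bases the matrix of $\Phi$ is exactly the (transpose of the) array in \eqref{eq3}; hence the Sylvester determinant equals, up to sign, $\det \Phi$, and it vanishes precisely when $\Phi$ fails to be injective.

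The next step is the key equivalence: $\Phi$ has a nontrivial kernel if and only if $f$ and $g$ share a common factor of positive degree. For the forward implication, if $h=\gcd(f,g)$ has positive degree, write $f = h f_1$ and $g = h g_1$; then $(A,B)=(g_1,-f_1)$ is a nonzero kernel element obeying $\deg g_1 \le n-1$ and $\deg f_1 \le m-1$. For the converse, suppose $Af+Bg=0$ with $(A,B)$ nonzero and the stated degree bounds. If $f$ and $g$ were coprime, then $g \mid Af$ forces $g \mid A$, which is impossible for $\deg A \le n-1 < n = \deg g$ unless $A=0$; but then $Bg=0$ gives $B=0$, a contradiction. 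Hence $\gcd(f,g)$ has positive degree. Since $K$ is algebraically closed, a common factor of positive degree is equivalent to a common root in $K$, and the hypotheses $\deg f = m$, $\deg g = n$ guarantee $a_m,b_n \neq 0$, so no zero ``at infinity'' can appear. This establishes $\res(f,g)=0 \iff f,g$ have a common zero, once we know $\res(f,g)$ is the Sylvester determinant.

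Finally I would identify the Sylvester determinant with the resultant normalized earlier. Both are polynomials in the coefficients $a_i,b_j$ cutting out exactly the common-zero locus just described, and the Sylvester determinant is homogeneous of degree $n$ in the $a_i$ and degree $m$ in the $b_j$ (each factor draws linearly on one $a$-row and one $b$-row), matching the degrees predicted by Proposition \ref{resultantdeg}. As the resultant is irreducible and defines the same hypersurface, the two agree up to a nonzero scalar, which I would pin to $1$ by comparing a single term — for instance the product-over-roots formula $\res(f,g) = a_m^{\,n}\prod_i g(\alpha_i)$, with $\alpha_i$ the roots of $f$, is directly satisfied by the Sylvester determinant and fixes the normalization.

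The linear-algebra identification and the coprimality argument are routine. The step I expect to require the most care is this last one: reconciling the Sylvester determinant with the \emph{normalized, irreducible} resultant of the abstract definition, which is phrased for homogeneous forms with common zeros in $\C^{m+1}\setminus\{0\}$. One must check that dehomogenization is compatible, verifying in particular that the excluded origin and any zeros at infinity do not interfere precisely because $\deg f = m$ and $\deg g = n$ are assumed exact.
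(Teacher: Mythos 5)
Your argument is correct and is essentially the standard Sylvester-matrix proof: the paper itself offers no proof beyond a citation to Cox--Little--O'Shea, and the argument given there is precisely your identification of the determinant with the map $(A,B)\mapsto Af+Bg$ on polynomials of bounded degree, together with the kernel/common-factor equivalence and algebraic closure. Your closing remarks on normalization and zeros at infinity correctly flag the only delicate point in matching this to the homogeneous resultant of the preceding subsection.
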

\begin{proof}\cite{CLO} pg. 77, Chapter 3.\end{proof}

We can homogenize the two polynomials in Proposition \ref{resultantoftwopoly} and obtain a similar condition for  homogeneous polynomials to have a non-trivial zero - that is, a zero in $\CP^1(K)$. This is discussed in the next section with $g(x) = \frac{d f}{d x} = f_x(x)$.

\subsubsection{Discriminant of a homogeneous polynomial in two variables}
\label{bivariatepolynomials}
Consider a binary form $f\in\C[x,y]$ of degree $d>0$
\begin{align}
\label{eq4}
    f(x,y)=\sum_{i}^d  c_i x^{d-i}y^a
\end{align}
The {\em discriminant of $f$}, $\disc(f)$  is defined to be the resultant of the two formal partial derivatives of $f$, 
$$\disc(f)=\res_{d-1,d-1}(f_x,f_y)$$
The following is due to Sylvester.

\begin{prop}[Sylvester] Let $f$ be as above, the discriminant has the following expression. 
\label{sylvester}
\begin{align}
   \disc(f)=\det\begin{bmatrix}
        c_1 & 2c_2 & \cdots & \cdots & dc_{d} & \dots \\
         & c_1 & 2c_2 & \cdots & (d-1)c_{d-1} & dc_{d} & \dots \\
         \dots \\
         \dots \\
        c_0 & 2c_1 & \cdots & \cdots & (d-1)c_{d-1} & \dots \\
         & c_0 & 2c_1 & \cdots & (d-2)c_{d-2} & (d-1)c_{d-1} \dots \\
         \dots \\
         \dots
    \end{bmatrix}
\end{align}
\end{prop}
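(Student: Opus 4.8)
The plan is to reduce the claim to the Sylvester determinant formula for the resultant of two one--variable polynomials, Proposition \ref{resultantoftwopoly}, applied to the pair $(f_x,f_y)$. First I would compute the two partials explicitly. Writing $f(x,y)=\sum_{i=0}^d c_i x^{d-i}y^i$ one finds
\[
f_x=\sum_{i=0}^{d-1}(d-i)\,c_i\,x^{d-1-i}y^i,\qquad f_y=\sum_{i=1}^{d} i\,c_i\,x^{d-i}y^{i-1},
\]
both homogeneous of degree $d-1$, so that $\disc(f)=\res_{d-1,d-1}(f_x,f_y)$ is by definition the resultant of two binary forms of equal degree $d-1$. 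The coefficient vectors are therefore $\big((d-i)c_i\big)_i$ for $f_x$ and $\big(i\,c_i\big)_i$ for $f_y$.

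Next I would bridge from this homogeneous resultant to the affine Sylvester determinant of \eqref{eq3}. Dehomogenizing (say setting $x=1$) turns $f_x$ and $f_y$ into univariate polynomials with the coefficient vectors above, and feeding these into the determinant of Proposition \ref{resultantoftwopoly} with $m=n=d-1$ produces a $2(d-1)\times 2(d-1)$ matrix consisting of $(d-1)$ staggered rows built from the coefficients $i\,c_i$ of $f_y$ and $(d-1)$ staggered rows built from the coefficients $(d-i)\,c_i$ of $f_x$. This is precisely the displayed matrix, up to the labelling of the two blocks. The one point requiring care is that the homogeneous resultant $\res_{d-1,d-1}$ really coincides with this Sylvester determinant and is not off by a power of a leading coefficient: since both sides are polynomials in the $c_i$, it suffices to check equality on the dense open locus where the leading coefficients $c_{d-1}$ (of $f_x(1,\cdot)$) and $d\,c_d$ (of $f_y(1,\cdot)$) are nonzero, so that no degree drop occurs at $[1:0]$, and then to invoke the normalization $\res(t_0^{d_0},t_1^{d_1})=1$, which pins down the resultant uniquely.

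The main obstacle is exactly this homogeneous--versus--affine comparison: one must ensure that the passage to the dehomogenized Sylvester matrix introduces no extraneous scalar factor, and that the staggered block structure of \eqref{eq3} is transcribed consistently with the chosen dehomogenization. Everything else is bookkeeping. As a conceptual check that $\res(f_x,f_y)$ deserves to be called the discriminant, I would note via Euler's identity $d\,f=x f_x+y f_y$ (Theorem \ref{th5.1}) together with Corollary \ref{cor5.2} that the common zeros of $f_x$ and $f_y$ are exactly the singular points of $\{f=0\}$, so that the vanishing of this determinant detects precisely a repeated root of the binary form, as it should.
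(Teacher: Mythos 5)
Your argument is correct and is the standard derivation: since the paper \emph{defines} $\disc(f)=\res_{d-1,d-1}(f_x,f_y)$, the content of the proposition is exactly the identification of this resultant of two binary forms of equal degree $d-1$ with the staggered Sylvester determinant of their coefficient vectors, which you obtain by dehomogenizing and invoking Proposition \ref{resultantoftwopoly}, handling the possible degree drop by a density argument. The paper itself gives no argument here at all -- its ``proof'' is a bare citation to [GeKZ, Ch.~2, eq.~2.9] -- so your write-up supplies precisely the missing bridge, and it coincides with the route the authors evidently had in mind. Your added remark that Euler's identity (Theorem \ref{th5.1}) and Corollary \ref{cor5.2} show the common zeros of $f_x,f_y$ are the singular points is a sensible sanity check, though not needed for the determinant identity itself.

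One small caution: you assert the resulting matrix is ``precisely the displayed matrix, up to the labelling of the two blocks,'' but with $f=\sum_i c_i x^{d-i}y^i$ the coefficient vector of $f_x$ is $\bigl((d-i)c_i\bigr)_{i=0}^{d-1}=(dc_0,(d-1)c_1,\dots,c_{d-1})$, whereas the lower block as printed reads $c_0,2c_1,\dots,(d-1)c_{d-1}$. These do not literally agree, so either the display carries a typo or a different indexing convention is intended; your proof establishes the (correct) formula with the $f_x$-coefficients $(d-i)c_i$ in the lower block, and you should say so explicitly rather than claim exact agreement with the printed matrix. Your point about the absence of an extraneous scalar factor is also moot in this paper's conventions, since the authors note the resultant is only pinned down up to a nonzero rational multiple.
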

\begin{proof} \cite{GeKZ}, eq 2.9, Chapter 2, pg 60. This is a $(2d-2) \times (2d-2)$ matrix. \end{proof}

	We apply this to the following situation. If $f$ is a homogeneous polynomial of degree $d$ in $3$ variables and $g$ is a homogeneous polynomial of degree $1$, then $h=f\circ g$ is a polynomial in two variables which determines the points of intersection of $V(f)$ with the line  $l=V(g) \simeq \CP^1$. Hence 
	$$l \text{ is tangent to } V(f) \Leftrightarrow \disc(h)=0$$

\begin{comment}

Hence we have the following
\begin{prop}
\label{prop5.8}
    The discriminant of a bivariate homogeneous degree $d$ polynomial as in eq.\hyperref[eq4]{(4)} is indeed given by the sylvester matrix \hyperref[eq5]{(5)}.
\end{prop}
\begin{proof}
    The discriminant of a bivariate homogeneous polynomial is by definition the resultant of its two partial derivatives. So It vanishes if and only if both partial derivatives of the polynomial have a non-trivial common zero, this means $(x,y)\neq(0,0)$. Taking the polynomial in eq.\hyperref[eq4]{ (4)}, compute both partial derivatives, $f_x$ and $f_y$. Then since $(x,y)\neq (0,0)$, dehomogenize both $f_x$ and $f_y$ with respect one variable and use proposition \hyperref[prop5.7]{5.7} on the resulting single variable variable polynomials. This coincides with the sylvester matrix above.
\end{proof}

\end{comment}

\subsubsection{Discriminant of a plane cubic curve}
Let $f\in\C[x,y,z]$ be a homogeneous equation of degree $3$ defining a planar cubic curve. Let us denote the curve by $C:=\{[x:y:z]\in\CP^2(\C)\mid f(x,y,z)=0\}$. 
%The curve is called rational if the topological genus $g$ of the curve is $0$ (cf. discussion following Theorem ).

Following the discussion after Theorem \hyperref[degree-genus]{\ref{degree-genus}}, $C$ is \emph{rational} if and only if it has a singular point at some $p\in C$. Equivalently, The tangent space of the curve at a point $p$ is more than one dimensional. The tangent space at $p$ has the equation
\begin{align*}
    xf_x(p)+yf_y(p)+zf_z(p)=0
\end{align*}
So the tangent space is not well-defined if and only if all three partial derivatives vanish at $p\in\CP^2(\C)$, i.e. $f(p)=f_x(p)=f_y(p)=f_z(p)=0$. Observe that, the point $p\in\CP^2(\C)$ is a common root for each of the partial derivatives of $f$. Note that the partial derivatives $f_x,f_y,f_z$ are homogeneous of degree $2$. We have the following.
\begin{prop}
\label{rationalcubic}
    Let $C$ be a planar cubic curve given by the zero locus of $f\in\C[x,y,z]$. Then 
    
    $$C \text { is rational }\Leftrightarrow \res_{2,2,2}(f_x,f_y,f_z)=0$$
\end{prop}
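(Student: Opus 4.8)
The plan is to reduce the geometric statement about rationality to the algebraic condition on the resultant by combining Proposition~\ref{resultantdeg} on the vanishing of resultants with the degree-genus formula (Theorem~\ref{degree-genus}) and the observation immediately preceding the proposition. First I would recall that by the discussion after Theorem~\ref{degree-genus}, an irreducible plane cubic $C=V(f)$ is rational if and only if it possesses a singular point $p\in\CP^2(\C)$, since the arithmetic genus $\frac{(d-1)(d-2)}{2}=1$ for $d=3$ drops to the geometric genus $0$ exactly when there is a singularity contributing $\frac12 r_x(r_x-1)=1$, i.e. an ordinary node (or cusp) with $r_x=2$. So rationality is equivalent to the existence of a point $p$ at which $C$ fails to be smooth.

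Next I would translate ``$C$ is singular at $p$'' into the vanishing of the partial derivatives. A point $p$ is singular on $C$ precisely when the tangent equation $xf_x(p)+yf_y(p)+zf_z(p)=0$ is not well-defined, i.e. when $f_x(p)=f_y(p)=f_z(p)=0$. The subtlety here is that singularity ostensibly also requires $f(p)=0$; the key step is to invoke Corollary~\ref{cor5.2} (a consequence of Euler's Identity, Theorem~\ref{th5.1}), which guarantees that if all three partials vanish at $p$ then automatically $f(p)=0$. Thus the existence of a singular point on $C$ is equivalent to the existence of a common zero $p\in\CP^2(\C)$ of the three quadratic forms $f_x,f_y,f_z$, with no separate condition on $f$ itself.

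The final step is purely a matter of the defining property of the resultant. Since $f_x,f_y,f_z$ are three homogeneous forms of degree $2$ in three variables $x,y,z$, Proposition~\ref{resultantdeg} applies (with $m=2$ and $d_0=d_1=d_2=2$): the resultant $\res_{2,2,2}(f_x,f_y,f_z)$ vanishes if and only if the three forms have a common nontrivial zero in $\C^3\setminus\{0\}$, equivalently a common zero in $\CP^2(\C)$. Chaining the three equivalences together --- rationality $\Leftrightarrow$ existence of a singular point $\Leftrightarrow$ common zero of $f_x,f_y,f_z$ $\Leftrightarrow$ $\res_{2,2,2}(f_x,f_y,f_z)=0$ --- yields the claim.

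The main obstacle, though largely a matter of care rather than depth, is the logical bookkeeping around the $f(p)=0$ condition: one must be sure that Corollary~\ref{cor5.2} genuinely eliminates it so that the resultant of the three partials (and not some augmented system involving $f$) detects rationality. A secondary point to address is the irreducibility hypothesis implicit in the degree-genus formula: if $f$ factors, $C$ is automatically rational and is also singular at the intersection points of its components, so the partials share a common zero and both sides of the asserted equivalence hold; this boundary case should be noted so the statement is valid without a standing irreducibility assumption.
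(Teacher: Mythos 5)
Your proposal is correct and follows essentially the same route as the paper's own proof: rationality is reduced to the existence of a singular point via the degree--genus discussion, Corollary~\ref{cor5.2} (Euler's identity) is used to see that a common zero of the partials automatically lies on $C$, and the defining vanishing property of the resultant closes the loop. Your added remarks on the reducible case and on the bookkeeping around $f(p)=0$ are sensible refinements but do not change the argument; note only that the vanishing criterion you want is the defining property of the resultant stated at the start of Section~\ref{resultants}, not Proposition~\ref{resultantdeg} (which concerns its degree).
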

\begin{proof}
    If $C$ is rational, by the above discussion, there exists a point $p$ on $C$ such that $f_x(p)=f_y(p)=f_z(p)=0$, hence the resultant $\res_{2,2,2}(f_x,f_y,f_z)$ also must vanish since $p$ is a common zero of $f_x,f_y,f_z$.
    
    Conversely, if the resultant vanishes, then there exists a non-trivial zero of $f_x,f_y,f_z$, say $p\in\CP^2(\C)$. We need to show that this point lies on the curve $C$ as well, but this is true by corollary \hyperref[cor5.2]{5.2}. Hence $p$ is a singular point on $C$. Hence $C$ is rational.
\end{proof}

This polynomial $\res_{2,2,2}(f_x,f_y,f_z)$ is called the discriminant of $f$, $\disc(f)$ or the discriminant of the cubic curve $C$, $\disc(C)$ . Salmon gave the following expression for the discriminant

\begin{thm}[Salmon]

Let $f$ be a homogeneous polynomial of degree $3$ in $\C[x,y,z]$ defining a cubic curve $C$ in $\CP^2$.
$$f(x,y,z)=\sum_{i+j+k=3} c_{ijk}x^iy^jz^k$$
Let the formal partial derivatives of $f$ be
\begin{align*}
    f_x=3c_{300}x^2 + 2c_{210}xy + c_{120}y^2 + 2c_{201}xz + c_{111}yz + c_{102}z^2
    \\
    f_y=c_{210}x^2 + 2c_{120}xy + 3c_{030}y^2 + c_{111}xz + 2c_{021}yz + c_{012}z^2
    \\
    f_z=c_{201}x^2 + c_{111}xy + c_{021}y^2 + 2c_{102}xz + 2c_{012}yz + 3c_{003}z^2
\end{align*}
Let $F$ be the determinant of the Jacobian matrix
\begin{align*}
    F=\det\begin{bmatrix}
        \frac{\partial f_x}{\partial x} & \frac{\partial f_x}{\partial y} & \frac{\partial f_x}{\partial z}\\
        \frac{\partial f_y}{\partial x} &
        \frac{\partial f_y}{\partial y} & \frac{\partial f_y}{\partial z}\\ 
        \frac{\partial f_z}{\partial x} &
        \frac{\partial f_z}{\partial y} & \frac{\partial f_z}{\partial z}
    \end{bmatrix}
\end{align*}
Note that $F\in\C[x,y,z]$ is homogeneous of degree $3$ because each entry of the matrix is linear. Take the following formal partial derivatives
\begin{align*}
    F_x=b_{11}x^2+b_{12}y^2+b_{13}z^2+b_{14}xy+b_{15}xz+b_{16}yz\\
    F_y=b_{21}x^2+b_{22}y^2+b_{23}z^2+b_{24}xy+b_{25}xz+b_{26}yz\\
    F_z=b_{31}x^2+b_{32}y^2+b_{33}z^2+b_{34}xy+b_{35}xz+b_{36}yz
\end{align*}
The coefficients $b_{ij}$ are determined by the $c_{ijk}$ for instance 

$$	b_{11}=-24c_{120}c_{201}^2 + 24c_{111}c_{201}c_{210} - 24c_{102}c_{210}^2 - 18c_{111}^2c_{300} + 72c_{102}c_{120}c_{300}$$

One then has
\begin{align}
\label{eq6}
    \disc(f)=\disc(C)=\res_{2,2,2}(f_x,f_y,f_z)=\det\begin{bmatrix}
    3c_{300} & c_{120} & c_{102} & 2c_{210} & 2c_{201} & c_{111}\\
    c_{210} & 3c_{030} & c_{012} & 2c_{120} & c_{111} & 2c_{021}\\
    c_{201} & c_{021} & 3c_{003} & c_{111} & 2c_{102} & 2c_{012}\\
    b_{11} & b_{12} & b_{13} & b_{14} & b_{15} & b_{16}\\
    b_{21} & b_{22} & b_{23} & b_{24} & b_{25} & b_{26}\\
    b_{31} & b_{32} & b_{33} & b_{34} & b_{35} & b_{36}
    \end{bmatrix}
\end{align}

\end{thm}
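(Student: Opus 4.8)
The plan is to read the asserted identity as a \emph{determinantal formula for the resultant of three conics}. Writing $P=f_x$, $Q=f_y$, $R=f_z$ (three ternary quadratic forms) and observing that the cubic $F$ is precisely the Jacobian determinant of $(P,Q,R)$, i.e. the Hessian $\det H$ of $f$ with $H=(f_{x_ix_j})$, the six rows of the $6\times 6$ matrix $M$ of \eqref{eq6} are exactly the coefficient vectors, in the monomial basis $(x^2,y^2,z^2,xy,xz,yz)$, of the six conics $f_x,f_y,f_z,F_x,F_y,F_z$. Thus $\det M$ vanishes precisely when these six conics fail to span the $6$-dimensional space of conics. The strategy is the standard one for resultant formulas: (i) show $\det M$ vanishes on the entire zero locus of $\res_{2,2,2}(f_x,f_y,f_z)$; (ii) check that $\det M$ and the resultant are homogeneous of the same degree in the $c_{ijk}$; (iii) pin down the scalar by one explicit evaluation.

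The crux is step (i), which I would isolate as a lemma: \emph{if $p=[x_0:y_0:z_0]$ is a common zero of $f_x,f_y,f_z$, then $F_x(p)=F_y(p)=F_z(p)=0$ as well.} Granting this, all six conics vanish at $p$; since vanishing at $p$ is a single linear condition on the six coefficients of a conic, the six rows of $M$ all lie in one $5$-dimensional subspace of the space of conics and are therefore linearly dependent, so $\det M=0$. To prove the lemma I would apply Euler's identity (Theorem \ref{th5.1}) twice. Applied to each degree-$2$ form $f_{x_i}$ it yields the identity $H\,\mathbf{x}=2\nabla f$ with $\mathbf{x}=(x,y,z)^{\top}$ and $\nabla f=(f_x,f_y,f_z)^{\top}$; in particular $H(p)\mathbf{p}=0$, so $\mathbf{p}\in\ker H(p)$ and $F(p)=\det H(p)=0$ (the singular points of $C$ lie on the Hessian). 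Differentiating the same relation once more gives $\sum_j x_j f_{x_ix_jx_k}=H_{ik}$, hence $\mathbf{x}^{\top}(\partial_{x_k}H)\,\mathbf{x}=2f_{x_k}$. Now write $F_{x_k}=\operatorname{tr}\!\bigl(\operatorname{adj}(H)\,\partial_{x_k}H\bigr)$. If $\operatorname{rank}H(p)=2$ then, $H$ being symmetric, $\operatorname{adj}(H(p))=c\,\mathbf{p}\,\mathbf{p}^{\top}$ for a scalar $c$, so $F_{x_k}(p)=c\,\mathbf{p}^{\top}(\partial_{x_k}H)\mathbf{p}=2c\,f_{x_k}(p)=0$; and if $\operatorname{rank}H(p)\le 1$ then $\operatorname{adj}(H(p))=0$ and $F_{x_k}(p)=0$ trivially. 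Either way the lemma holds.

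With the lemma in hand, $\det M$ vanishes along the hypersurface $V\bigl(\res_{2,2,2}(f_x,f_y,f_z)\bigr)$, which is the discriminant hypersurface of plane cubics. Since that discriminant is (classically) an irreducible polynomial in the $c_{ijk}$, it divides $\det M$. For the degree match: by Proposition \ref{resultantdeg} the resultant is homogeneous of degree $4$ in the coefficients of each of $f_x,f_y,f_z$, hence of total degree $12$ in the $c_{ijk}$; and $\det M$ is also homogeneous of degree $12$, because in its Leibniz expansion the three rows coming from $f_x,f_y,f_z$ contribute entries linear in the $c_{ijk}$ while the three rows $b_{ij}$ (the coefficients of $F_x,F_y,F_z$, with $F=\det H$ cubic in the $c_{ijk}$) are cubic, giving $3\cdot 1+3\cdot 3=12$ per term. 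Therefore $\det M=\kappa\cdot\res_{2,2,2}(f_x,f_y,f_z)$ for a universal constant $\kappa$.

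Finally I would fix $\kappa$ by a single smooth test curve, e.g. the Fermat cubic $f=x^3+y^3+z^3$: there $f_x=3x^2$, $f_y=3y^2$, $f_z=3z^2$ and $F=216\,xyz$, so the six conics are scalar multiples of the monomial basis, $M$ is anti-diagonal, and both $\det M$ and $\res_{2,2,2}$ are read off at once, determining $\kappa$. The precise value of $\kappa$ is immaterial for the use made of this formula in the paper, where only the \emph{vanishing} of the discriminant is relevant (cf. Proposition \ref{rationalcubic}). The main obstacle is the vanishing lemma --- that a singular point of the cubic is automatically a singular point of its Hessian curve --- which is exactly what the two applications of Euler's identity together with the rank/adjugate analysis of $H(p)$ supply; the remaining bookkeeping (degree count and constant) is routine.
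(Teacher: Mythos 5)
Your argument is correct, but note that the paper does not actually prove this theorem: its ``proof'' is a one-line citation to \cite{CLO} (Chapter 3) for the classical $6\times 6$ determinantal formula for $\res_{2,2,2}$. What you supply is essentially the standard derivation of that classical formula, specialized to the case at hand where the three conics are the partials of a cubic. Your key vanishing lemma --- that a common zero $p$ of $f_x,f_y,f_z$ forces $F_x(p)=F_y(p)=F_z(p)=0$ --- is proved cleanly via the symmetry of the Hessian: $H\mathbf{x}=2\nabla f$ gives $\mathbf{p}\in\ker H(p)$, Jacobi's formula gives $F_{x_k}=\operatorname{tr}(\operatorname{adj}(H)\partial_{x_k}H)$, and $\operatorname{adj}(H(p))=c\,\mathbf{p}\mathbf{p}^{\top}$ together with $\mathbf{x}^{\top}(\partial_{x_k}H)\mathbf{x}=2f_{x_k}$ finishes it. (The textbook proof handles arbitrary triples of conics by normalizing $p=[0:0:1]$ and checking the Jacobian has no $z^3$ or $z^2$ terms; your Hessian shortcut is a legitimate simplification available only in this symmetric case, which is all the theorem asserts.) Two small caveats. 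First, the divisibility-plus-degree-count step requires that $\res_{2,2,2}(f_x,f_y,f_z)$, viewed as a degree-$12$ polynomial in the $c_{ijk}$, be irreducible (not merely have irreducible zero locus); otherwise it could be a proper power of an irreducible and the degree count would not pin down $\det M$ as a scalar multiple. You flag this as ``classical,'' which it is, but it is the one genuinely external input in your argument and deserves a citation (e.g.\ to \cite{GeKZ}). Second, your Fermat-cubic evaluation gives $\kappa=-512$ under the normalization $\res(x^2,y^2,z^2)=1$, so the displayed identity \eqref{eq6} holds only up to that nonzero constant --- which is exactly the caveat the paper itself records immediately after the theorem, and is harmless for every use made of the formula (only the vanishing locus matters, cf.\ Proposition \ref{rationalcubic}).
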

\begin{proof}\cite{CLO} Chapter 3, eq.(2.8), pg. 89.\end{proof}

The resultant is well-defined up to multiplication by a non-zero rational number.

\subsection{Vertex covers and products of lines}
\label{lines}

For the purpose of computation it is useful to find a generating set of the set of degree $d$ curves passing thought $k$ points. As we will see in the case of $(4,2)$ and $(3,3)$ it is easy to find degree $2$ curves passing through $4$ points or $3$ points as products of two lines.

One might wonder for any $d$ whether one can find degree $d$ curves passing through $k$ points as products of $d$ of the lines $l_i$. We can frame this in terms of the graph and in  some cases use that to find such curves. A {\bf vertex cover}  $\TV$ of a graph is a subset of the set of vertices  $V$ such that {\em every edge} is adjacent to a element of $\TV$. 

For instance, for the graph $(4,2)$ in Section \ref{Case42} for the square graph $v_1v_2v_3v_4$ the sets  $\{v_1,v_3\}$ and $\{v_2,v_4\}$ are vertex covers but $\{v_1,v_2\}$ is not as the edge $\overline{v_3v_4}$ is not covered. The vertex covers correspond to the required conics - 
$$\{v_i,v_j\} \leftrightarrows l_il_j$$
As the degree increases and as the number of points of tangency increases, we can use this idea to find a convenient generating set of degree $d$ curves passing through a few points.

\section{Results}

Recall that for a graph of type $(k,3d-k)$ the corresponding Humbert Invariant $\Delta$ is 
$2d^2+7-2k$ or $2d^2+8-2k$ depending on whether $d+k$ is odd or even respectively. 

\subsection{Curves of degree 2}

We give an exposition of the three degree 2 cases

\subsubsection{\textbf{Case 1: $(5,1)$ $\Delta=5$}}
	
	This is the original case studied by Humbert. This can be found in Hashimoto-Murabayashi \cite{hamu}, Theorem 2.9. Here the graph is Figure \ref{51}.

\begin{enumerate}
	\item Choose points $q_{12},q_{23},q_{34},q_{45},q_{51}$ and the line $l_6: z=0$. Recall that the coordinates are given in terms of $a_1,a_2$ and $a_3$.

	\item Using simple linear algebra compute  the equation of the conic $C$ passing through the above $5$ points. Denote it by $f\in\C[a_1,a_2,a_3][x,y,z]$. One can also take the determinant of the $6\times 6$ matrix whose first rows are the monomials and all other rows are filled by the coordinates of the $5$ points.
	\item Substitute $z=0$ into $f$. The resulting expression has the form $c_1x^2+c_2y^2+c_3xy$ where $c_1,c_2,c_3\in\Z[a_1,a_2,a_3]$. The zeroes of this are precisely the points of intersection of the line $l_6$ and the conic. 
	\item The line $z=0$ is tangent to the conic if and only if $c_3^2-4c_1c_2=0$ (note that this is the discriminant of a quadratic equation).
\end{enumerate}
The Humbert modular equation is thus $c_3^2-4c_1c_2$. The following is an example code using SAGE for computing this Humbert modular equation.
\begin{verbatim}
	-------------------------------------------------------
	# Define the symbolic variables
	x, y, z, a_1, a_2, a_3 = var('x y z a_1 a_2 a_3')
	
	# Define the first row of the matrix (as a list of expressions)
	first_row = [
	x^2, y^2, z^2, x*y, y*z, x*z
	]
	
	# Define 5 points where these expressions will be evaluated for the subsequent 
	rows points = [(-(a_1+a_2), 2*a_1*a_2, 2), (-(a_3+a_2), 2*a_3*a_2, 2), 
	(-(1+a_3), 2*a_3, 2), (-1, 0, 2), (-a_1, 0, 2)]
	
	# Initialize the matrix with the first row
	matrix = [first_row]
	
	# Add the remaining 9 rows, each being evaluated at the corresponding point
	for point in points:
	x_val, y_val, z_val = point
	row = [expr.subs({x: x_val, y: y_val, z: z_val}) for expr in first_row]
	matrix.append(row)
	
	# Create the matrix
	M = Matrix(matrix)
	
	# Show the resulting matrix
	M
	
	# Equation of the conic curve
	conic = det(M)
	
	
	# Recursively eliminate x and y
	r1 = conic.subs(z==0)
	r2 = r1.subs(y==1)
	
	# Get the coefficients c_1,c_2,c_3
	c2 = r2.subs(x==0)
	c3 = (r2.diff(x)).subs(x==0)
	c1 = (r2.diff(x)).diff(x)/2
	
	# compute Humbert modular equation
	D = c3^2-4*c1*c2
	
	# output the resultant to a text file
	o = open('disc.txt', 'w')
	o.write(D)
	o.close()
	------------------------------------------------------
\end{verbatim}

\subsubsection{\textbf{Case 2: $(4,2)$ $\Delta=4,8$}}

\label{Case42}

To reproduce Theorem 2.12, pg. 286, \cite{hamu}, which is stated without proof, one could proceed as follows. Here the graph is Figure \ref{42}.

\begin{enumerate}
	\item Choose the points $q_{12},q_{23},q_{34},q_{14}$ and lines $l_5:y=0$ and $l_6:z=0$.

	\item Consider the following local picture
	\begin{center}
		\begin{tikzpicture}[x=0.75pt,y=0.75pt,yscale=-1,xscale=1]
			%uncomment if require: \path (0,634); %set diagram left start at 0, and has height of 634
			
			%Shape: Rectangle [id:dp8069026692965818] 
			\draw   (171,269.68) -- (468.67,269.68) -- (468.67,493.68) -- (171,493.68) -- cycle ;
			%Straight Lines [id:da01628046143325057] 
			\draw    (272,301.68) -- (436.67,439.68) ;
			%Straight Lines [id:da7356856495241829] 
			\draw    (215,319.68) -- (278.67,468.68) ;
			%Straight Lines [id:da8295956261854766] 
			\draw    (190,355.68) -- (356.67,313.68) ;
			%Straight Lines [id:da1662778510389581] 
			\draw    (219,413.68) -- (422.67,396.68) ;
			
			% Text Node
			\draw (437,289.08) node [anchor=north west][inner sep=0.75pt]    {$\mathds{P}^2$};
			% Text Node
			\draw (282,447.08) node [anchor=north west][inner sep=0.75pt]    {$l_{1}$};
			% Text Node
			\draw (409,374.08) node [anchor=north west][inner sep=0.75pt]    {$l_{2}$};
			% Text Node
			\draw (279,287.08) node [anchor=north west][inner sep=0.75pt]    {$l_{3}$};
			% Text Node
			\draw (192,359.08) node [anchor=north west][inner sep=0.75pt]    {$l_{4}$};
			% Text Node
			\draw (375,405.08) node [anchor=north west][inner sep=0.75pt]    {$q_{23}$};
			% Text Node
			\draw (288,334.08) node [anchor=north west][inner sep=0.75pt]    {$q_{34}$};
			% Text Node
			\draw (225,330.08) node [anchor=north west][inner sep=0.75pt]    {$q_{14}$};
			% Text Node
			\draw (235,415.08) node [anchor=north west][inner sep=0.75pt]    {$q_{12}$};

		\end{tikzpicture}
		\vspace{0.1cm}
		\text{The $4$ points chosen as intersection of the lines}
		\label{fig2}
	\end{center}
	\item The equation \[f = l_1l_3t_0+l_2l_4t_1\in\C[a_1,a_2,a_3,a_4][x,y,z][t_0,t_1]\] defines a pencil of conics passing through the $4$ chosen points exactly once.
	
	\item Substitute $y=0$ into $f$. The resulting expression looks like 
	$$c_1(t_0,t_1)x^2+c_2(t_0,t_1)z^2+c_3(t_0,t_1)xz.$$
	where $c_i(t_0,t_1)$ are linear in $t_0$ and $t_1$. 
	
	Similar to case 1 above, the line $l_5:y=0$ is tangent to the family of conics if and only if $D_1 = c_3^2-4c_1c_2\in\Z[a_1,a_2,a_3,a_4][t_0,t_1]$ vanishes. Note that $D_1$ is homogeneous in $t_0,t_1$ of degree $2$.
	
	\item Similarly we require the line  $l_6:z=0$ to be a tangent. Substitute $z=0$ into $f$. The resulting expression looks like $d_1(t_0,t_1)x^2+d_2(t_0,t_1)y^2+d_3(t_0,t_1)xy$. Similar to case 1 above, the line $l_6$ is tangent to the family of conics if and only if $D_2 = d_3^2-4d_1d_2\in\Z[a_1,a_2,a_3,a_4][t_0,t_1]$ vanishes. Note that $D_2$ is homogeneous in $t_0,t_1$ of degree $2$.

	\item Both lines $l_5$ and $l_6$ are tangent to $f$ if and only if $D_1$ and $D_2$ have a common root in $t_0,t_1$. Using eq.\hyperref[resultantoftwopoly]{(3)}, let $r=\res_{t_1}(D_1,D_2)\in\Z[a_1,a_2,a_3,a_4][t_0]$. This eliminates the variable $t_1$.

	\item Substitute $t_0=1$ into $r$.
\end{enumerate}
The resulting expression obtained above is Humbert modular equation corresponding to $\Delta=8$ or $4$ as stated in \cite{hamu}, Th. 2.12 on pg. 286.
\begin{verbatim}
	-------------------------------------------------------------
	# Define the symbolic variables
	x, y, z, a_1, a_2, a_3, a_4, t_0, t_1 = var('x y z a_1 a_2 a_3 a_4 t_0 t_1')
	
	# Define the first row of the matrix (as a list of expressions)
	first_row = [
	x^2, y^2, z^2, x*y, y*z, x*z
	]
	
	# Define the lines
	l1 = y+2*a_1*x+a_1^2*z
	l2 = y+2*a_2*x+a_2^2*z
	l3 = y+2*a_3*x+a_3^2*z
	l4 = y+2*a_4*x+a_4^2*z
	
	# Define the pencil of conics
	conic = l1*l3*t_0+l2*l4*t_1
	
	# Substitute y=0 and z=0
	conicy = conic.subs(y==0)
	conicz = conic.subs(z==0)
	
	# compute the discriminants similar to the previous case
	to get D1 and D2. 
	# Eliminate variable t_1 from D1 and D2
	r = D1.resultant(D2,t_1)
	
	# Substitute t_0=1
	r = r.subs(t_0==1)
	
	# output the resultant to a text file
	o = open('disc.txt', 'w')
	o.write(r)
	o.close()
	------------------------------------------------------------
\end{verbatim}

\subsubsection{\textbf{Case 3: $(3,3)$ $\Delta=9$}}

Here the graph is Figure \ref{33}. Along the same lines one can consider $f = t_0l_1l_2+t_1l_2l_3+t_2l_1l_3$ and proceed exactly as in Case 2 above. The modular equation corresponds to $\Delta=9$.

\subsubsection{\textbf{Case 4: (0,6)}}

\begin{center}
	\begin{tikzpicture}[scale=0.5]
		% Define the vertices
		\node[circle, draw, fill=blue!20] (v1) at (0,0) {1};
		\node[circle, draw, fill=blue!20] (v2) at (2,0) {2};
		\node[circle, draw, fill=blue!20] (v3) at (3,2) {3};
		\node[circle, draw, fill=blue!20] (v4) at (2,4) {4};
		\node[circle, draw, fill=blue!20] (v5) at (0,4) {5};
		\node[circle, draw, fill=blue!20] (v6) at (-1,2) {6};
		
		% Draw the edges
	%	\draw (v1) -- (v2);
	%	\draw (v2) -- (v3);
	%	\draw (v3) -- (v1);

		% Add a loop to vertices 5 and 6
		\draw (v6) to [out=135, in=225, loop] (v6);
		\draw (v5) to [out=135, in=225, loop] (v5);
		\draw (v4) to [out=45, in=-45, loop] (v4);
		\draw (v1) to [out=135, in=225, loop] (v6);
		\draw (v2) to [out=45, in=-45, loop] (v5);
		\draw (v3) to [out=45, in=-45, loop] (v4);

	\end{tikzpicture}
	
	The case (0,6). 
	
\end{center}
This is another apparently admissible case - though we require $k \geq 3$ to apply Theorem \ref{birkwilh} - when there is a conic tangent to all the six lines $l_i$. 
However, in the Kummer plane corresponding to an Abelian surface it is always the case that there is a conic tangent to the six lines. One can consider the more general situation of the $K3$ surfaces determined by the minimal desingularization of the double cover of $\CP^2$ ramified at six lines. This is a four dimensional moduli space and in that the moduli of Kummer surfaces of Abelian surfaces is precisely the three dimensional moduli where the six lines are tangent to a conic. 

\subsection{Curves of degree $3$}

We consider a general degree $3$ planar curve $C$ defined by the following equation
\begin{align*}
f=\sum_{i+jk=3} c_{ijk}x^iy^jz^k
\end{align*}

Recall that our strategy here is to find a possibly smooth cubic curve which meets the singular sextic $S=\bigcup_{i=1}^6 l_i$ only at double points - either the points $q_{ij}=l_i \cap l_j$ or a point $t_i$ where $l_i$ meets $C$ tangentially. 

It has to meet each line $l_i$ at three points - and the only possible combinations are three points of the form $q_{ij}$ or one point $q_{ij}$ and a point $t_i$. Since there are six lines there can be at most six points of tangency of $S$ and $C$.  

A homogeneous cubic equation is determined by $10$ coefficients so the the space of cubic curves in $\CP^2$ is parameterized by $\CP^9$. The condition that it passes through a point imposes a linear relation on the coefficients and hence the dimension drops by $1$. For $k$ points in general position the dimension will drop by $k$. So the set of cubics passing through $k$ points is parameterized by $\CP^{9-k}$

Our strategy is to consider the space spanned by the $9-k$ cubics passing though $k$ points and find those tangent to a $9-k$ lines. 

Let $C_0,\dots,C_{9-k}$ be the cubic curves passing through $k$ points of the form $q_{ij}$ 
defined by cubic polynomials  $f_i$. Since $q_{ij}$ are determined by $a_1,a_2,a_3$ of the original hyperelliptic curve, the coefficients of the  $f_i$ are determined by them as well. Note that if $\{f_i\},0 \leq r$ pass through a point $P$ so does any $f$ of the form $\sum_0^r a_i f_i$.  

We have the following theorem

\begin{thm} Let $f=\sum_{i=0}^{9-k} t_i f_i$. Let $g_i=f \circ l_i$, which is a homogeneous polynomial in $2$ variables and coefficients determined by $t_i$ and $a_1,a_2,a_a$. Then $\disc(g_i)$ is a polynomial function of $t_i$ and $a_1,a_2,a_3$. Let $D(f)=\disc(f)=\res_{2,2,2}(f_x,f_y,f_z)$. This too is a polynomial in function of $t_i$ and $a_1,a_2,a_3$.

For each $k \geq 3$ the Kummer plane contains  a rational cubic curve passing through $k$ of the points $q_{ij}$ and meeting $9-k$ of the  singular lines $l_i$ tangentially if and only if 
$$\res(\disc(g_0),\dots,\disc(g_{9-k}),\disc(f))=0$$
where here we are thinking of the $g_i$ and $f$  as functions of $t_0,\dots,t_{9-r}$, hence 
the last expression is a polynomial function in $a_1,a_2$ and $a_3$.  
\label{cubiccase}

\end{thm}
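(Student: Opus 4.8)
The plan is to turn each clause of the geometric statement into the vanishing of one explicit homogeneous form in the parameters $t_0,\dots,t_{9-k}$, and then to observe that the number of such forms equals the number of parameters, so that their simultaneous vanishing is detected by a single resultant. First I would record the linear system: under the assumption that the $k$ chosen points $q_{ij}$ are in general position they impose independent linear conditions on the $10$-dimensional space of ternary cubics, so the cubics through them form a linear system of projective dimension $9-k$ with basis $f_0,\dots,f_{9-k}$. Thus as $[t_0:\dots:t_{9-k}]$ ranges over $\CP^{9-k}$ the form $f=\sum_i t_i f_i$ ranges over exactly the cubics through the $k$ points, and since the $f_i$ are linearly independent every point of $\CP^{9-k}$ gives a genuine nonzero cubic. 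Because the coordinates of the $q_{ij}$ are polynomial in $a_1,a_2,a_3$, the coefficients of $f$, and hence everything built below, are polynomial in $a_1,a_2,a_3$.

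Next I would convert the two kinds of incidence conditions into discriminants. By the discussion at the end of Section \ref{bivariatepolynomials}, the cubic $V(f)$ meets a line $l_i$ non-transversally exactly when the binary cubic $g_i=f\circ l_i$ has a repeated root, i.e.\ when $\disc(g_i)=0$; and by Proposition \ref{rationalcubic} the cubic is rational (possesses a singular point) exactly when $\disc(f)=\res_{2,2,2}(f_x,f_y,f_z)=0$. Since $g_i$ is linear in the $t_j$ with coefficients polynomial in the $a_j$, Proposition \ref{resultantdeg} shows $\disc(g_i)$ is homogeneous of degree $4$ in the $t_j$, and likewise $\disc(f)$ is homogeneous of degree $12$; in particular each is an honest homogeneous form in $t_0,\dots,t_{9-k}$.

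The crucial bookkeeping is the count. A rational cubic through the $k$ points carrying a tangency on each of the $9-k$ lines that bear a loop corresponds to a point of $\CP^{9-k}$ annihilating the $9-k$ forms $\disc(g_{i_1}),\dots,\disc(g_{i_{9-k}})$ (one for each such line) together with $\disc(f)$ — that is, $10-k$ homogeneous forms in the $10-k$ variables $t_0,\dots,t_{9-k}$. This equality of the number of equations and unknowns is precisely the enumerative coincidence that lets the problem be packaged in one resultant. By the defining property of the multivariate resultant recalled in Section \ref{resultants}, these forms have a common zero in $\CP^{9-k}$ if and only if $\res\bigl(\disc(g_{i_1}),\dots,\disc(g_{i_{9-k}}),\disc(f)\bigr)=0$. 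For one implication a common zero $t^{*}$ produces the cubic $f^{*}=\sum t_i^{*}f_i$, which passes through the $k$ points, is singular hence rational, and is non-transversal to each chosen line; conversely such a cubic has defining form $f'=\sum t_i' f_i$ lying in the system, so $[t']$ is a common zero and the resultant vanishes. Substituting the polynomial expressions in $a_1,a_2,a_3$ then exhibits the resultant as a polynomial in $a_1,a_2,a_3$, as claimed.

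The step I expect to be the main obstacle is matching this algebraic equivalence to the precise Birkenhake--Wilhelm configuration of Theorem \ref{birkwilh}, that is, excluding degenerate common zeros. A solution of $\disc(g_i)=0$ only forces a repeated root of $f\circ l_i$; I must argue that for a generic configuration this repeated root is a new tangency point $t_i$ distinct from the base point lying on $l_i$, that the singular point detected by $\disc(f)=0$ is an honest node or cusp of an irreducible cubic rather than an accidental incidence with an imposed point, and that the resulting nine intersection points of $V(f)$ with the sextic $S=\bigcup l_i$ are all genuine double points. With the looser reading of ``meeting tangentially'' as ``$\disc(g_i)=0$'' the biconditional follows immediately from the three cited results; the real work is in discarding the spurious branches of the common-zero locus, so that the cubic produced actually feeds the double-cover mechanism of Lemma \ref{doublecover} and yields the advertised real multiplication.
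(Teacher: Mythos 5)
Your proposal is correct and follows essentially the same route as the paper: parametrize the cubics through the $k$ points by $\CP^{9-k}$, encode tangency to $l_i$ by $\disc(g_i)=0$ and rationality by $\disc(f)=0$, and detect a common zero of these $10-k$ forms in $10-k$ variables via the vanishing of the multivariate resultant. The degeneracy issues you flag at the end (spurious repeated roots, reducible or accidentally singular members of the pencil) are likewise left unaddressed in the paper's own proof, so your version is, if anything, the more candid account of where the remaining work lies.
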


\begin{proof}
Consider the $9-k$ dimensional projective space spanned by these, namely, cubic polynomials  of the form 
$$f=\sum_{i=0}^{9-k} t_i f_i$$
We want to find a cubic curve passing through the $k$ points and tangent to $9-k$ lines $l_i$. Let $l_i$ be one of those lines.  Let $g_i=f\circ l_i$. Then from Proposition \ref{sylvester} we have 
$$l_i \text{ is tangent to the curve } C=V(f) \Leftrightarrow \disc(g_i)=\res_{2,2}(g_{i,x},g_{i,y})=0$$
where $x$ and $y$ are co-ordinates such that  $g_i$ is a homogeneous polynomial of degree $2$.

Note that $\disc(g_i)$ is a polynomial in the $t_0,\dots,t_{9-k}$ with coefficients polynomials in  $a_1,a_2$ and $a_3$. Hence if $\res(\disc(g_1),\dots,\disc(g_{9-k}))=0$ for some $t_1,\dots,t_{9-k}$, $C$ will be tangent to all the lines $l_i$. Since these are $9-k$ conditions, such a $C$ always exists. 

However, this may not be rational. $C$ is rational if and only if $\disc(f)=0$. This determines polynomial conditions  on $a_1,a_2$ and $a_3$.

\end{proof}

Putting all this together, we have  the following theorem 

\begin{thm} Let 
	$$C: y^2=x(x-1)(x-{a_1})(x-{a_2})(x-{a_3})$$
	be a genus two hyperelliptic curve in $\CP^2$ where $a_i \in \CP^1-\{0,1,\infty\}$. Let $A=J(C)$. 
	
	For $3 \leq k \leq 12$, let 
	$$F_{3,k}(a_1,a_2,a_3)=\res(\disc(g_0),\dots,\disc(g_{9-k}),\disc(f))$$
	Then if  $F_{3,k}(a_1,a_2,a_3)=0$, $A$ has real multiplication by $\Delta'$ where  $0< \Delta' \leq 25-2k$ if $k$ is even and $0<\Delta' \leq 26-2k$ if $k$ is odd.  

\end{thm}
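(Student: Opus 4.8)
The plan is to treat this statement as the synthesis of two results already in hand: Theorem \ref{cubiccase}, which converts the vanishing of $F_{3,k}$ into the existence of a suitable rational cubic on the Kummer plane, and the converse half of the Birkenhake--Wilhelm Theorem \ref{birkwilh}, which converts that cubic into a Neron--Severi class of prescribed Humbert invariant. The only genuinely new work is bookkeeping: specializing the degree to $d=3$, checking that the cubic produced by Theorem \ref{cubiccase} literally satisfies the hypotheses of the converse, and reading the correct value of $\Delta$ off the table.

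First I would apply Theorem \ref{cubiccase} directly. Since $F_{3,k}(a_1,a_2,a_3)=\res(\disc(g_0),\dots,\disc(g_{9-k}),\disc(f))$, its vanishing is exactly the condition produced there, hence it is equivalent to the existence of a rational cubic $Q$ in $\CP^2_A$ passing through $k$ of the nodes $q_{ij}$ and meeting $9-k$ of the lines $l_i$ tangentially. I would then verify that $Q$ meets the degenerate sextic $S=\bigcup_{i=1}^6 l_i$ only at double points: by Bezout (Theorem \ref{bezout}) the intersection $Q\cap S$ has total multiplicity $3\cdot 6=18$, and it is accounted for by the $k$ nodes, each contributing multiplicity $2$ since $Q$ passes simply through $q_{ij}=l_i\cap l_j$ and so meets each of $l_i,l_j$ once there, together with the $9-k$ tangency points, each contributing multiplicity $2$. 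Thus $2k+2(9-k)=18$ and every intersection point is of even multiplicity, which is precisely the input required by the converse direction of Theorem \ref{birkwilh}.

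Next I would read off $\Delta$ from the table in Theorem \ref{birkwilh} with $d=3$. Here $d+k=3+k$, so $d+k$ is odd exactly when $k$ is even (Case $I$) and even exactly when $k$ is odd (Case $II$), giving
\[
\Delta=\begin{cases} 2\cdot 9+7-2k=25-2k, & k\text{ even},\\ 2\cdot 9+8-2k=26-2k, & k\text{ odd}.\end{cases}
\]
Applying the converse statement of Theorem \ref{birkwilh} to $Q$ then yields a class in $NS(A)$ of Humbert invariant $\Delta'$ for some $\Delta'\le\Delta$. To see $\Delta'>0$, I would invoke Lemma \ref{doublecover}: the preimage of $Q$ in $K_A$ is a double cover of $Q\cong\CP^1$ ramified at the (singular) double points, hence splits as $D_1\cup D_2$ with $\sigma(D_1)=D_2$ for the covering involution $\sigma$; then $D_1-D_2$ is orthogonal to the polarization class $L_0$ and nonzero, so it lies in $PNS(A)$, on which the Humbert form $H$ is positive definite by the Hodge Index Theorem. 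Hence $\Delta'=H(D_1-D_2)>0$, and this class furnishes $\ZZ[\sqrt{\Delta'}]\subseteq\End(A)$, i.e.\ $A$ has real multiplication by $\Delta'$ with $0<\Delta'\le 25-2k$ for $k$ even and $0<\Delta'\le 26-2k$ for $k$ odd.

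The main obstacle is not the chain of implications but the inexactness at its end: the converse of Theorem \ref{birkwilh} only delivers $\Delta'\le\Delta$, never equality, because the rational cubic may degenerate or the class it induces may be imprimitive, so the recovered invariant can drop below the generic value $\Delta$. A secondary point requiring care is to confirm that the cubic supplied by Theorem \ref{cubiccase} is genuinely nodal with its singularity placed away from $S$, and that each of its $9-k$ non-nodal contacts with $S$ occurs with even multiplicity rather than as a transversal triple point of the type excluded in Section \ref{planecurves}; these admissibility constraints are exactly what the graph classification of the preceding section and the degree--genus count are arranged to guarantee, so I would cite that analysis to close the gap.
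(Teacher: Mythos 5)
Your proposal is correct and follows the same route the paper intends: the theorem is stated as an immediate consequence ("putting all this together") of Theorem \ref{cubiccase}, which translates the vanishing of $F_{3,k}$ into the existence of a rational cubic through $k$ nodes and tangent to $9-k$ lines, combined with the converse direction of Theorem \ref{birkwilh} and the table evaluated at $d=3$ (your parity bookkeeping $25-2k$ for $k$ even, $26-2k$ for $k$ odd matches the paper's table). Your added Bezout count and the positivity argument via Lemma \ref{doublecover} and the Hodge Index Theorem make explicit details the paper leaves implicit, but they do not change the approach.
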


The discriminant of an element $f=\sum_{i=0}^{9-k} t_if_i$  of the $9-k$  dimensional family of cubics is homogeneous of degree $12$ in $t_0,\dots,t_{9-k}$. 

To compute the degree of $\disc(g_i)$ note that in Proposition \ref{sylvester}, the entries of the matrix are linear in the $t_i$ and the degree is $3$ so the matrix is of rank $2 \times 3-2=4$. 
Hence each $\disc(g_i)$ is homogeneous of degree $4$ 

Thus the resultant $\res(D(f),\disc(g_i),\dots,\disc(g_{(9-k)}))$ is a resultant of mixed degree polynomials.

\subsubsection{\textbf{Case 1:} $(9,0)$ $\Delta=9$}
As remarked before, the bi-partite case does not lead to anything new as such cubics always exist. Here we have to consider the non-bipartite graph $(9,0)b$

\label{Case90b}

\begin{center}
	\begin{tikzpicture}[scale=0.5]
		% Define the vertices
		\node[circle, draw, fill=blue!20] (v1) at (0,0) {1};
		\node[circle, draw, fill=blue!20] (v2) at (2,0) {2};
		\node[circle, draw, fill=blue!20] (v3) at (3,2) {3};
		\node[circle, draw, fill=blue!20] (v4) at (2,4) {4};
		\node[circle, draw, fill=blue!20] (v5) at (0,4) {5};
		\node[circle, draw, fill=blue!20] (v6) at (-1,2) {6};
		
		% Draw the edges
		\draw (v1) -- (v2);
		\draw (v1) -- (v3);
		\draw (v1) -- (v6);
		\draw (v2) -- (v5);
		\draw (v2) -- (v3);
		\draw (v3) -- (v4);
		\draw (v5) -- (v4);
		\draw (v5) -- (v6);
		\draw (v6) -- (v4);

		% Add a loop to vertex 6
		%\draw (v6) to [out=135, in=225, loop] (v6);
	\end{tikzpicture}
	
The graph $(9,0)b$.
\end{center}

We proceed as follows.

\begin{enumerate}
	
\item The nine points are $q_{12}, q_{13}, q_{16}, q_{23}, q_{25}, q_{34}, q_{45}, q_{46}$ and $q_{56}$
    
\item Compute the defining equation: $f\in \C[x,y,z]$ as the determinant of a $10\times 10$ matrix using the $9$ points above.
    
\item Computing the discriminant: eliminate one variable at a time using Proposition \ref{resultantoftwopoly}. Denote by $\res_s$ the resultant of two polynomials with respect to any variable $s$.  
    
\item Compute $r_1 = \res_x(f_x,f_y)\in\C[y,z]$ and $r_2 = \res_x(f_y,f_z)\in\C[y,z]$. This eliminates the variable $x$.
    
\item Compute $r_3 = \res_y(r_1,r_2)\in\C[z]$. This eliminates the variable $y$.
    
\item Substitute $z=1$ into $r_3$, this is the discriminant of $f$, $D(f)\in \Z[a_1,a_2,a_3,a_4]$.
\end{enumerate}
The discriminant can be also computed using Proposition \ref{sylvester}. This would be the irreducible resultant while the steps above might produce a multiple of the resultant. In any case, computing the discriminant is quite a CPU-intensive task. It is easier if two of more of the $a_i$'s are given numerical values. 

The polynomial $D(f)$ is the Humbert modular equation corresponding to $\Delta = 8$ (cf. \hyperref[valdelta]{table}).

The computation can be done using any computer algebra system. Here is an example using SAGE.
\label{case1}
\begin{verbatim}
------------------------------------------------------------------
# Define the symbolic variables
x, y, z, a_1, a_2, a_3, a_4 = var('x y z a_1 a_2 a_3 a_4')

# Define the first row of the matrix (as a list of expressions)
first_row = [
    x^3, y^3, z^3, x^2*y, x^2*z, y^2*x, y^2*z, z^2*x, z^2*y, x*y*z
]

# Define 9 points where these expressions will be evaluated for the subsequent 
rows points = [(-(a_1+a_4), 2*a_1*a_4, 2), (-a_1, 0, 2), (-1, 2*a_1, 0), 
(-(a_2+a_3), 2*a_2*a_3, 2), (-(a_2+a_4), 2*a_2*a_4, 2), (-a_2, 0, 2), 
(-(a_3+a_4), 2*a_3*a_4, 2), (-a_3, 0, 2), (1, 0, 0)]

# Initialize the matrix with the first row
matrix = [first_row]

# Add the remaining 9 rows, each being evaluated at the corresponding point
for point in points:
    x_val, y_val, z_val = point
    row = [expr.subs({x: x_val, y: y_val, z: z_val}) for expr in first_row]
    matrix.append(row)

# Create the matrix
M = Matrix(matrix)

# Show the resulting matrix
M

# Equation of the cubic curve
cubic = det(M)

# Get the partial derivatives
cubicx = cubic.diff(x)/128
cubicy = cubic.diff(y)/64
cubicz = cubic.diff(z)/256

# Recursively eliminate x and y
r1 = cubicx.resultant(cubicy,x)
r2 = cubicy.resultant(cubicz,x)
r3 = r1.resultant(r2,y)

# output the resultant to a text file
o = open('disc.txt', 'w')
o.write(r3.subs(z==1))
o.close()
----------------------------------------------------------------
\end{verbatim}

\subsubsection{\textbf{Case 2:} $(8,1)$ $\Delta=8$}
\label{Case81}

\begin{center}
	\begin{tikzpicture}[scale=0.5]
		% Define the vertices
		\node[circle, draw, fill=blue!20] (v1) at (0,0) {1};
		\node[circle, draw, fill=blue!20] (v2) at (2,0) {2};
		\node[circle, draw, fill=blue!20] (v3) at (3,2) {3};
		\node[circle, draw, fill=blue!20] (v4) at (2,4) {4};
		\node[circle, draw, fill=blue!20] (v5) at (0,4) {5};
		\node[circle, draw, fill=blue!20] (v6) at (-1,2) {6};
		
		% Draw the edges
		\draw (v1) -- (v2);
		\draw (v1) -- (v3);
		\draw (v1) -- (v5);
		\draw (v2) -- (v3);
		\draw (v2) -- (v4);
		\draw (v3) -- (v4);
		\draw (v4) -- (v5);
		\draw (v5) -- (v6);
			
		% Add a loop to vertex 6
		\draw (v6) to [out=135, in=225, loop] (v6);
	\end{tikzpicture}
	
	The graph $(8,1)$.
\end{center}

The following steps may be taken.
\begin{enumerate}
    \item Pick $8$ double points corresponding to the graph above. There are eight double points 
    \[\{q_{12}, q_{13}, q_{15}, q_{23}, q_{24}, q_{34}, q_{45}, q_{56}\}\] 
    along with a tangent to the line $l_6$ at $t_6$. This corresponds to the graph depicted above.

    \item Let $p_1$ and $p_2$ be two points in $\CP^2$ at least one of which is not on $S$.

    \item Let $f_1$ be the equation of a cubic curve passing through the eight points as well as $p_1$. Similarly let $f_2$ be the equation of the cubic curve through $p_2$ and the eight double points. 
    
    \item Consider the family $f(x,y,z)=t_1f_1(x,y,z)+t_2f_2(x,y,z)$ over $\CP^1$. 
    
    \item Compute the discriminant $D(f)$ using eq.\hyperref[eq6]{(6)} or the recursive method used in the previous case.
    
    \item For the singular line $l_6:z=0$, using eq.\hyperref[sylvester]{(5)} compute $\disc(g_6)$ where $g_6=f\circ l_6$.

    \item Compute the resultant $\res(D(f),\disc(g_6))$ (using proposition \hyperref[resultantoftwopoly]{\ref{resultantoftwopoly}}) after dehomogenizing to an affine variable, say $t = t_1/t_2$. We know $D(f)$ and $g_6$ are homogeneous of degrees $12$ and $4$ respectively in $t_1,t_2$ (cf. proposition \hyperref[resultantdeg]{\ref{resultantdeg}}).

\end{enumerate}

Note that it is possible to compute the resultant in step 7 above first by fixing one of the projective coordinates and then substituting the value $1$ into the resulting expressing. That is, dehomogenizing can be done at the end. We shall do this in all the successive cases.
This case corresponds to $\Delta=9$. 

Along the lines of Section \ref{lines} one can see that the set $\{v_5,v_2,v_3\}$ is a vertex cover so one of the cubics passing through the $8$ points is $l_5l_2l_3$. However there do not appear to be any more of this type.

We also get the following corollary (which is well known)
\begin{cor}
    There are exactly $12$ rational cubic curves passing through $8$ points in general position in $\CP^2(\C)$.
\end{cor}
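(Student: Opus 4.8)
The plan is to interpret rational cubics as singular cubics and to count them as the intersection of a pencil with the discriminant hypersurface in the space of all plane cubics.

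First I would fix the parameter space. A plane cubic is determined by its ten coefficients $c_{ijk}$, so the family of all cubics is $\CP^9$. Passing through a point imposes one linear condition on the coefficients, and for eight points in general position these eight conditions are independent. Hence the cubics through the eight points form a line (pencil) in $\CP^9$. I would fix two cubics $f_0,f_1$ spanning this pencil and write the general member as $f=t_0 f_0+t_1 f_1$ with $[t_0:t_1]\in\CP^1$.

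Next I would convert rationality into an equation in $t_0,t_1$. By the discussion following Theorem \ref{degree-genus} together with Proposition \ref{rationalcubic}, an irreducible cubic is rational exactly when it is singular, and this happens precisely when $\disc(f)=\res_{2,2,2}(f_x,f_y,f_z)=0$. By Proposition \ref{resultantdeg} this resultant is homogeneous of total degree $2\cdot 2\cdot 2\cdot(\tfrac12+\tfrac12+\tfrac12)=12$ in the coefficients of $f$; since $f$ depends linearly on $(t_0,t_1)$, the restriction $\disc(t_0 f_0+t_1 f_1)$ is a homogeneous polynomial of degree $12$ in $t_0,t_1$. As long as this polynomial is not identically zero, it has exactly $12$ zeros in $\CP^1$ counted with multiplicity, so the pencil meets the discriminant hypersurface $\{\disc=0\}\subset\CP^9$ (which has degree $12$) in $12$ points, consistent with Theorem \ref{bezout}.

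The remaining and main point is a genericity argument showing that for eight points in general position these twelve zeros are simple and each corresponds to an irreducible cubic with a single node, so that each is genuinely rational: by the degree-genus formula with one node ($d=3$, $r_x=2$) the geometric genus is $\tfrac{(d-1)(d-2)}{2}-\tfrac12 r_x(r_x-1)=0$. The loci in $\CP^9$ of reducible cubics, of cuspidal cubics, and of cubics with two or more nodes each have codimension at least two, and a general line in $\CP^9$ meets none of them. Since general position of the eight points makes the pencil a sufficiently general line, and in particular not contained in the discriminant (a smooth cubic through eight general points exists, so $\disc$ is not identically zero on the pencil), all twelve singular members are one-nodal irreducible cubics and the twelve roots are distinct. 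I expect this transversality/genericity step to be the hard part: verifying that the pencil avoids the codimension-two strata and meets the discriminant transversally. A clean way to close it is to exhibit one explicit configuration of eight points for which the degree-$12$ discriminant has twelve distinct simple roots, each an irreducible nodal cubic, and then invoke semicontinuity to conclude the same count for every configuration in general position, giving exactly twelve rational cubics.
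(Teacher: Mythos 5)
Your proof follows essentially the same route as the paper: restrict the degree-$12$ discriminant $\res_{2,2,2}(f_x,f_y,f_z)$ to the pencil of cubics through the eight points and count its twelve zeros in $\CP^1$. The genericity issues you flag (simplicity of the roots, irreducibility and one-nodality of the singular members) are not addressed in the paper, which stops at ``a degree $12$ polynomial has $12$ roots,'' so your version is if anything more careful than the original.
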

\begin{proof}
    Given $8$ points, consider a $1$-parameter family of cubics over, $f_t=f_1+tf_2$ where $f_1$ and $f_2$ are the equations of  cubic curves (possibly reducible) such that $f_1$ is not a scalar multiple of $f_2$.
    
    The rational curves in the family $f_t$ correspond to cubic equations with vanishing discriminant. Using eq.\eqref{eq6}, we can compute $D(f_t)$.  From the formula for the discriminant, $D(f_t)$ is a degree $12$ polynomial in $\C[t]$. Hence $D(f_t)=0$ at precisely $12$ values of $t$. 
\end{proof}

\subsubsection{\textbf{Case 3:} $(7,2)$ $\Delta=12$}

Here there are two cases, the graphs $(7,2)a$ and $(7,2)b$ in Figure \ref{72}. One proceeds along the same lines but it is interesting to note that here one can find convenient vertex covers which give a generating set of the degree $3$ polynomials passing through $7$ points. 

For instance, in the case of $(7,2)a$, the seven points are $q_{12}, q_{13}, q_{16}, q_{23}, q_{24}, q_{34}$ and $q_{45}$.  The space of cubics passing through these seven points is parametrized by $\CP^2$ and so one needs $3$ cubics. 

One can see that $\{v_1,v_4,v_2\}$, $\{v_1,v_4,v_3\}$ and $\{v_6,v_5,v_3\}$ are three vertex covers and so  $f_{142}=l_1l_4l_2$, $f_{143}=l_1l_4l_3$ and $f_{653}=l_6l_5l_3$ are three cubics passing through the seven points. All such cubics are given by 
$$t_0f_{142}+t_1f_{143}+t_2f_{653}$$

\subsubsection{\textbf{Case 4:} $(6,3)$ $\Delta=13$}

\begin{center}
	\begin{tikzpicture}[scale=.5]
		% Define the vertices
		\node[circle, draw, fill=blue!20] (v1) at (0,0) {1};
		\node[circle, draw, fill=blue!20] (v2) at (2,0) {2};
		\node[circle, draw, fill=blue!20] (v3) at (3,2) {3};
		\node[circle, draw, fill=blue!20] (v4) at (2,4) {4};
		\node[circle, draw, fill=blue!20] (v5) at (0,4) {5};
		\node[circle, draw, fill=blue!20] (v6) at (-1,2) {6};
		
		% Draw the edges
		\draw (v1) -- (v2);
		\draw (v2) -- (v3);
		\draw (v3) -- (v1);
		\draw (v3) -- (v4);
		\draw (v2) -- (v5);
		\draw (v1) -- (v6);

		% Add a loop to vertex 6
		\draw (v4) to [out=45, in=-45, loop] (v4);
		\draw (v5) to [out=135, in=225, loop] (v5);
		\draw (v6) to [out=135, in=225, loop] (v6);
		
	\end{tikzpicture}
	
	The graph $(6,3)$.
\end{center}

\label{Case63}

To obtain the Humbert modular equation we do the following. 

\begin{enumerate}
    \item Pick $6$ double points, for example, corresponding to the following graph

    \item Start with the parametric equation of the cubic $f\in\C[x,y,z][t_0,t_1,t_2,t_3]$ \[f=t_0C_0+t_1C_1+t_2C_2+t_3C_3\]
    
    \item Compute the discriminant $D(f)$ using eq.\hyperref[eq6]{(6)}.
    
    \item Compute $T_1(f),T_2(f),T_3(f)\in\C[t_0,t_1,t_2,t_3]$ for each tangent using eq.\hyperref[sylvester]{(5)}.
    
    \item Now recursively eliminate $t_1,t_2,t_3$ using eq.\hyperref[resultantoftwopoly]{(3)}.
    
    \item Eliminate $t_1$: $r_1 = \res_{t_1}(D(f),T_1(f))$, $r_2 = \res_{t_1}(T_1(f),T_2(f))$, $r_3 = \res_{t_1}(T_2(f),T_3(f))$.
    
    \item Eliminate $t_2$: $s_1 = \res_{t_2}(r_1,r_2)$, $s_2 = \res_{t_2}(r_2,r_3)\in\Z[a_1,a_2,a_3,a_4][t_0,t_3]$.
    
    \item Eliminate $t_3$: $u = \res_{t_3}(s_1,s_2)\in\Z[a_1,a_2,a_3,a_4][t_0]$.
    
    \item Finally substitute $t_0=1$ into $u$.
\end{enumerate}
This case corresponds to $\Delta = 13$.

\subsection{Remarks on higher degree curves}

In this section we outline the procedure for higher degree curves. The procedure is similar- only computationally harder.

The space  of curves of degree $d$ in $\CP^2$, $S_d$  is $\frac{(d+1)(d+2)}{2}-1$ dimensional. Hence the condition that the curve passes through $3d-1$ double points  does not determine a unique curve. In fact there is a  $\frac{(d+1)(d+2)}{2}-1-(3d-1) = \frac{(d-1)(d-2)}{2}=g$ family of degree $d$ curves through $3d-1$ points. Let $T_d$ be this family. Note that the dimension is the geometric genus of a generic fibre of this family - namely a smooth curve of degree $d$.    

Suppose $F_{\ut}=F_{t_0,\dots,t_{g}}$ is the equation of such a curve $C_{\underline{t}}$. The condition that it has a singular point is given by $\disc(F_{\ut})=0$. The set of such equations  determines a codimensional $1$ subvariety $T_1$ of the space of degree $d$ curves passing through the $3d-1$ double points 
	
Over $T_1$, the set of singular points is 
$$\rm{Sing}(C_{T_1})=V((F_{\ut})_x,(F_{\ut})_y,(F_{\ut})_z).$$
namely the set of common zeroes of all the partial derivatives.  Over $T_1$, there is at least one section  $\sigma$ of the $\CP^2$ bundle over $T_1$ such that $\sigma(\ut)$ is a singular point of $C_{\ut}$.

We can then blow up the point $\sigma_T$  in $\CP^2_T$ and consider the strict transform of $\tilde{C}_{T_1}$ of $C_{T_1}$.  This is a new family over $T_1$ with generic geometric genus one less than that of $C_T$. The subvariety of such curves with $\disc(\tilde{F}_{\ut})=0$ -- where $\tilde{F}_{\ut}$ is the equation of $\tilde{C}_{\ut}$ -- is again of codimension one. Repeating this process $g$ times results in a $0$ dimensional family of degree $d$ curves of genus $0$ - that is, rational curves,   passing through $3d-1$ double points. 

Recall that we considered the resultant 
$$\res (\disc(g_0),\dots,\disc(g_{3d-k}))$$
which is a function of $t_0,\dots,t_{3d-k}$ and the $a_1,a_2,a_3$.  This parametrized the space of degree $d$ curves passing through $k$ points and tangent to $3d-k$ lines. Considering 
$$F_{d,k}(a_1,a_2,a_3)=\res{(\disc(F),\disc(\tilde{F}),\dots,\disc(g_0),\dots,\disc(g_{3d-k}))}$$
results in a function of $a_1,a_2$ and $a_3$ and is the modular equation corresponding to $\Delta(d,k)$. 
	
\subsection{Remarks on higher genus curves}

We have considered only genus $0$ (rational) curves, However, this does not cover all possible $\Delta$. Birkenhake and Wilhelm \cite{biwi}, Theorem 7.6 show that one sometimes needs to use higher genus curves. The theorem asserts that if the Kummer plane admits a $g$-dimensional linear system passing through $3d$ double points then the corresponding Abelian surface has real multiplication by some $\Delta$ determined by $d,k$ and $g$, where $k$ is the number of ponts $q_{ij}$ that the linear system passes through. 

This can also be treated in the same way.  For similar reasons as above in general, for the Kummer plane of any Abelian surface, one will have a $g$ dimensional linear system of genus $g$ and degree $d$ curves passing through $3d-1$ double points. These curves will meet the sextic at two other points and when they coincide it means that the corresponding Abelian surface has real multiplication by some $\Delta>0$. As above this can be translated in to a condition on $a_1, a_2$ and $a_3$.

\bibliographystyle{alpha}
\bibliography{references.bib}

\begin{tabular}[t]{l@{\extracolsep{8em}}l} 
	Rahul Mistry & Ramesh Sreekantan\\
	Statistics and Mathematics Department & Statistics and Mathematics Department\\
	Indian Statistical Institute & Indian Statistical Institute \\
	8th Mile, Mysore Road & 8th Mile, Mysore Road \\
	Jnanabharathi & Jnanabharathi \\
	Bengaluru 560 059 & Bengaluru 560 059 \\ 
	rs\_math2003@isibang.ac.in & rameshsreekantan@gmail.com 
\end{tabular}

\end{document}